\newcommand{\R}{{\mathbb R}}
\newcommand{\ML}{{\mathcal L}}
\newcommand{\be}{\begin{eqnarray}}
\newcommand{\ben}{\begin{eqnarray*}}
\newcommand{\en}{\end{eqnarray}}
\newcommand{\enn}{\end{eqnarray*}}
\newtheorem{theorem}{Theorem}[section]
\newtheorem{lemma}[theorem]{Lemma}
\newtheorem{remark}[theorem]{Remark}
\newtheorem{definition}[theorem]{Definition}
\newtheorem{proposition}[theorem]{Proposition}
\newenvironment{proof}{{\em Proof.}}{$\Box$}
\definecolor{rot}{rgb}{0.000,0.000,0.000}
\newcommand{\tcr}{\textcolor{rot}}
\begin{document}
\renewcommand{\theequation}{\arabic{section}.\arabic{equation}}
\begin{titlepage}
\title{\bf Time-harmonic acoustic scattering from a non-locally perturbed trapezoidal surface
}

\author{
Wangtao Lu\thanks{ School of Mathematical Sciences, Zhejiang University, Hangzhou 310027, China ({\sf wangtaolu@zju.edu.cn}).}\quad \quad
Guanghui Hu\thanks{Beijing Computational Science Research Center, Beijing 100193, China ({\sf hu@csrc.ac.cn}).} \qquad
}
\date{}
\end{titlepage}
\maketitle


\begin{abstract}
This paper is concerned with acoustic scattering  from a sound-soft
 \tcr{trapezoidal surface} in two dimensions. The  trapezoidal surface is supposed to
  consist of two horizontal half-lines pointing oppositely, and a
    single finite vertical line segment connecting their endpoints, which can be regarded
  as a \tcr{non-local} perturbation of a  straight
  line. For incident plane waves, we enforce that the
    scattered wave, post-subtracting reflected plane waves by the two half lines
    of the scattering surface in certain two regions respectively, satisfies an
    integral form of Sommerfeld radiation condition at infinity. With this new
    radiation condition, we prove uniqueness and existence of weak solutions by a
  coupling scheme between finite element and integral equation methods.
   This consequently indicates that our new radiation condition is
    sharper than the Angular Spectrum Representation, and has generalized the
  radiation condition for scattering problems in a locally perturbed half-plane.

 Furthermore, we develop a numerical mode matching method  based on this new radiation condition. A perfectly matched layer is
    setup to absorb outgoing waves at infinity. Since the medium composes of two horizontally uniform
    regions, we expand, in either uniform region, the scattered wave in terms of
    eigenmodes and match the mode expansions on the common interface between the
    two uniform regions, which in turn gives rise to numerical solutions to our
    problem. Numerical experiments are carried out to validate the new radiation
    condition and to show the performance of our numerical method.
    
\vspace{.2in} {\bf Keywords:} Helmholtz equation, trapezoidal surface, Sommerfeld radiation condition, half plane,  non-local perturbation, mode matching method, perfectly matched layer.

\end{abstract}

\section{Introduction}

Wave scattering in a layered medium and \tcr{in a half plane} has numerous
applications in scientific and engineering areas \cite{chew95}. \tcr{For the
  purpose of proving} well-posedness of the scattering model, understanding the
physical radiation behavior of the wave field at infinity is critical. \tcr{A
  rigorous description of the asymptotics} not only helps to design a proper
radiation condition for the wave field at infinity, but also helps to truncate
the unbounded domain with an accurate boundary condition for numerically solving
the problem.

  Certainly, the radiation condition is structure-related. For instances, if
  \tcr{a bounded obstacle is embedded into a homogeneous background medium}, the
  scattered wave is purely outgoing at infinity and satisfies the classic
  Sommerfeld radiation condition (SRC) \cite{colkre13}. When the structure is
  filled in by a two-layered medium with a locally perturbed planar surface, the
  perturbed wave field due to the local perturbation, is outgoing at infinity
  and satisfies the SRC \cite{roazha92, chezhe10, baohuyin18, Li2010}; see also
  \cite{Bao00, Wood2006, Jin98, Willers1987} for studies on impenetrable locally
  perturbed surfaces. However, in the case of a globally perturbed rough
  surface, by which we mean a non-local perturbation of a planar surface such
  that the surface lies within a finite distance of the original plane, one in
  general cannot explicitly extract an outgoing wave from the scattered wave to
  meet the  SRC. \tcr{The Angular Spectrum
    Representation (ASR) radiation condition (see \cite{J.A, S.P, CWEL}) or the
    Upward Propagating Radiation Condition (UPRC) (see \cite{ZC98, ZC2003,
      CHP2006}) can be viewed as a rigorous formulation of a radiation condition
    to show the well-posedness of the problem in both two and three dimensions.
    The radiation condition relies also on the type of incoming waves in rough
    surface scattering problems. The authors in \cite{CWEL} proved that the
    scattered field incited by a plane wave decays slower than that for a point
    source wave in the horizontal direction. It was recently proved in
    \cite{Hu2018} that the scattered field due to a point source wave fulfills
    the Sommerfeld radiation condition in a half plane, which however does not
    hold true for plane wave incidence.}

  In this paper, we shall study a special class of two-dimensional (2D) rough
  surface scattering problems, where the globally perturbed surface is assumed
  to consist of two horizontal half lines pointing oppositely and one single
  vertical line segment connecting their endpoints; we shall propose a novel
  SRC-type radiation condition that is sharper than ASR. \tcr{Relying on this
    new radiation condition}, we shall prove existence and uniqueness of weak
  solutions and design a numerical method for the scattering problem.

Let $\Gamma$ be a sound-soft rough surface in two dimensions and suppose that
the region above $\Gamma$, which we denote by $\Omega_\Gamma$, is occupied by a
homogeneous and isotropic medium. Consider a time-harmonic plane wave
$u^{in}=e^{ikx\cdot d}$ incident onto {\color{rot} the} rough surface $\Gamma\subset \R^2$ from
above. Here, $k>0$ denotes the wave number, $d=(\cos\theta, -\sin\theta)$
stands for the incident direction and $\theta\in(0,\pi)$ is the incident angle
with the positive $x_1$-axis. In this paper, we suppose that $\Gamma$ consists
of three parts (see Fig.~\ref{fig:trap_pec}):
 \begin{equation}
   \label{eq:Gamma}
\Gamma=\{(x_1,0)|x_1\leq 0\}\cup V_h \cup \{(x_1,-h)|x_1\geq 0\}
\end{equation}
where $V_h=\{(0,x_2)|-h\leq x_2\leq 0\}$ \tcr{is a finite vertical line segment with the height $h>0$}.
\begin{figure}[!ht]
  \centering
  \includegraphics[width=0.5\textwidth, viewport=0.pt 130.pt 1000.pt
  740.pt,clip  ]{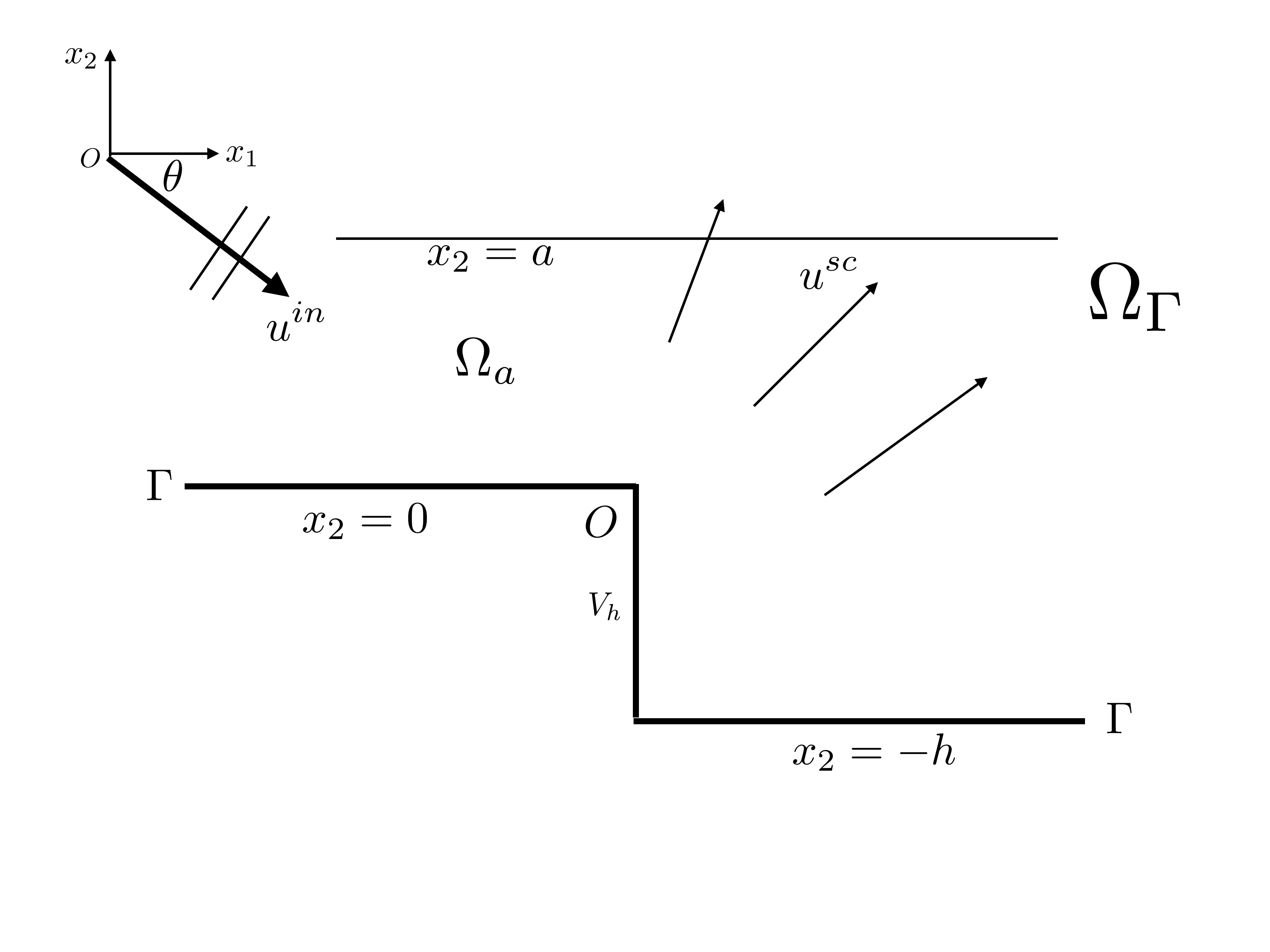}
  \caption{An incoming plane wave with the angle $\theta\in(0,\pi/2)$ is
    incident onto a trapezoidal surface $\Gamma$ in a half plane. {\color{rot}$\Omega_\Gamma$: the domain above $\Gamma$. $\Omega_a$: the domain between $\Gamma$ and $x_2=a$}.}
  \label{fig:trap_pec}
\end{figure}
Such kind of \tcr{trapezoidal surfaces is a non-local perturbation of the straight line $\{x_2=0\}$, which can be regarded as a special kind of globally perturbed rough surfaces}. Since the region $\Omega_\Gamma$ fulfills the geometrical condition
\be\label{shape-assumption}
(x_1, x_2)\in \Omega_\Gamma\quad \rightarrow\quad (x_1, x_2+s)\in \Omega_\Gamma\quad\mbox{for all}\quad s>0,
\en
 by \cite{CWEL} there exists a unique solution ${\color{rot}u^{tol}}\in H_\varrho^1({\color{rot}\Omega_a})$ for any $a>0$ and $\varrho\in(-1,-1/2)$ such that
\be
  \label{eq:hlm}
\Delta u^{tot}+k^2u^{tot}=0\quad&&\mbox{in}\quad \Omega_\Gamma,\quad u^{tot}=u^{in}+u^{sc},\\
  \label{eq:bc}
u^{tot}=0\quad&&\mbox{on}\quad \Gamma.
\en
Here, ${\color{rot}\Omega_a}=\{x\in \Omega_\Gamma, x_2<a\}$ is the unbounded strip between $\Gamma$ and $x_2=a$ (see Figure \ref{fig:trap_pec}), and
$H^{1}_\varrho({\color{rot}\Omega_a})$ stands for
weighted Sobolev space defined as
\be\label{WSS}
 \|u\|_{H^1_{\varrho}({\color{rot} \Omega_a })}
= \left[ \int_{{\color{rot} \Omega_a }} \bigg( \big| (1+|x_1|^2)^{\varrho/2}
      u\big|^2 + \left| \nabla \left[(1+|x_1|^2)^{\varrho/2} u\right]\right|^2 \bigg) d x
    \right]^{1/2} \, .
   \en
One can also employ the following equivalent norm to $||\cdot||_{H^1_{\varrho}({\color{rot}\Omega_a })}$:
\ben
||u||':=\left[ \int_{{\color{rot} \Omega_a }} (1+|x_1|^2)^{\varrho}\bigg( \big|
      u\big|^2 + \big| \nabla  u|^2 \bigg) d x
    \right]^{1/2} \, , \quad u\in H^1_{\varrho}({\color{rot} \Omega_a }).
\enn
Further, the scattered field $u^{sc}=u^{tot}-u^{in}$ in $\Omega_\Gamma$ is required to fulfill the {\color{rot} Upward  Angular Spectrum Representation (UASR)}, which can
be written as
\be\label{ASP}
u^{sc}(x)=\frac{1}{\sqrt{2 \pi}}\int_{\R}
\exp(i[(x_2-h)\sqrt{k^{2}-\xi^{2}}+x_1
\xi])\, \hat{u}^{sc}_{a}(\xi)\,d\xi, \quad x_1>a,
\en
where $\hat{u}^{sc}_{a}(\xi)$ denotes the Fourier transform of
$u^{sc}(x_1,a)$ in $x_1$, given by
\ben
\hat{u}^{sc}_{a}(\xi)=\frac{1}{\sqrt{2 \pi}}\int_{\R}\exp(-i x_1\, \xi)u^{sc}(x_1,a)\,dx_1.
\enn
 In this
equation $\sqrt{k^{2}-\xi^2}=i\sqrt{\xi^2-k^{2}}$ when $\xi^2>k^2$.
The representation of $u^{sc}$ in the integral (\ref{ASP}) can be
interpreted as a formal radiation condition in the physics and
engineering literature on rough surface scattering {\color{rot}(see e.g. \cite{S.P, J.A})}.

If $\Gamma$ is a local perturbation of the original straight line $\{x_2=0\}$ ,
it was proved in a series of papers {\color{rot}(see \cite{Bao00, baohuyin18, Jin98, Li2010, Willers1987, Wood2006})} that $u^{sc}$ can be decomposed into two
parts: $u^{sc}=u^{re}+v$ where $u^{re}$ is the scattered field corresponding to
unperturbed surface $\{x_2=0\}$ and $v\in H^1(\Omega_\Gamma)$ is caused by the
local perturbation satisfying the half-plane Sommerfeld radiation condition
\be
\label{RC}
\lim_{r\rightarrow\infty}\int_{S_{r}}|\partial_r v-ik v|^2\,ds\rightarrow0,\qquad
\sup_{r>0}
\int_{S_r}|v|^2\,ds<\infty, \en
where $S_r:=\{x\in
\Omega_\Gamma: |x|=r\}$. Physically, $u^{re}=-e^{ikx\cdot d'}$ with $d':=(d_1,-d_2)$ is uniquely
determined by Snell's law  and (\ref{RC}) indicates that $v$
  approximately becomes purely outgoing at infinity. We note that
  both $u^{re}$ and $v$ fulfill the ASR (\ref{ASP}), so that $u^{sc}$ also satisfies
   the ASR.  The decomposition of $u^{sc}$
into the sum of $u^{re}$ and $v$ provides a deep insight into the wave
phenomenon in a locally perturbed half plane, which however cannot apply to our
scattering problem in a non-locally perturbed half-plane. In this paper, we shall propose a new radiation condition,
which not only satisfies the ASR but also allows us to generalize the above
decomposition for the non-locally perturbed surface $\Gamma$ under consideration.


Since $\Gamma$ in (\ref{eq:Gamma}) is no longer a local perturbation of
$\{x_2=0\}$, subtracting $u^{re}$, defined in the last paragraph, from $u^{sc}$
no longer yields an outgoing wave at infinity; clearly, $u^{sc}$ contains a
reflected plane wave $u^{re}_h=-e^{ikx'_h\cdot d}$ with $x_h':=(x_1, -2h-x_2)$,
due to the half line $\{x_2=-h\}$, which is in general different
from $u^{re}$ for $h\neq 0$. It turns out that $u^{re}_h$ and $u^{re}$ reside in
two non-overlapping regions, respectively, which occupy the whole medium above
$\Gamma$ but are separated by a straight line $\ML$ parallel to $d'$, the
propagating direction of $u^{re}$. Consequently, we enforce that subtracting
$u^{re}$ in the region left to $\ML$ and then $u^{re}_h$ in the region right to
$\ML$ from $u^{sc}$, yields an outgoing wave at infinity satisfying (\ref{RC}).
Based on this new radiation condition, we prove uniqueness and existence of weak
solutions by a coupling scheme between finite element and integral equation
methods.

  To numerically compute $u^{sc}$, we place a perfectly matched layer (PML)
  \cite{ber94} to absorb the outgoing waves extracted from $u^{sc}$. Since $u^{sc}$
  differs from outgoing waves by only reflected plane waves propagating parallel to $d'$, we
  impose a homogeneous Robin-type boundary condition to eliminate $u^{re}$ and
  $u^{re}_h$ on the boundary of the PML by following \cite{luluson18}. As the medium
  structure can be decomposed into two $x_1$-uniform segment, the
  numerical mode matching (NMM) method of \cite{luluson18} will be adapted to our scattering problem.
  More precisely, we expand $u^{sc}$ in terms of eigenmodes in each uniform
  segment by separating variables in the governing equations, and then match the
  two mode expansions on the common interface separating the two regions. This
  in turn gives rise to a linear system of the unknown Fourier coefficients in
  the expansions. Solving the linear system yields a numerical solution to our
  scattering problem. Numerical experiments shall be carried out to show the
  performance of our numerical methods based on the new radiation condition.

  The NMM method \cite{chew95,luluson18}, a.k.a
  mode expansion method or modal methods \cite{botcramcp81, li93a, shestesan82}, and its numerous numerical invariants
  \cite{chiyehshi09, gra99, gragui96, kno78, lalmor96, li96, lushilu14, mor95,
  sonyualu11, derdezoly98, biebae01, biederbaeolydez01} are applicable when the structure can be divided into a number of
  segments, where the medium becomes uniform along one spatial variable.
  The classical mode matching method solves the eigenmodes analytically while the
NMM methods solve the eigenmodes by numerical methods,
and they are easier to implement and applicable to more general structures. The
mode matching method and its variants have the advantage of avoiding
discretizing one spatial variable. They are widely used in engineering
applications, since many designed structures are indeed piecewise uniform.

For numerical simulations of waves, the perfectly matched layer (PML)
\cite{ber94,chewee94} is an important technique for truncating unbounded
domains. It is widely used with standard numerical methods, such as finite
element methods \cite{mon03} and spectral methods, etc., that discretize the
whole computational domain. For structures that are piecewise homogeneous, the
boundary integral equation (BIE) methods \cite{cai02, brulyoperaratur16,
  laigreone16} are popular since they can automatically take care of radiation
conditions at infinity while discretizing only interfaces of the structure. For
scattering problems in layered media, PML can also be incorporated with BIE
methods to efficiently truncate interfaces that extend to infinity
\cite{luluqia18}.

The remaining of this paper is organized as follows. In the subsequent section
\ref{sec:well-posedness}, we enforce a new radiation condition on the scattered
field and prove uniqueness and existence of solutions to our globally perturbed
scattering problem. In section 3, we develop a NMM method to solve
  the scattering problem. Section \ref{sec:numerics} is devoted to numerical
examples.

\section{Well-posedness}\label{sec:well-posedness}
This section is devoted to existence and uniqueness of the 2D wave scattering problem in an
upper-half space with a trapezoidal {\color{rot} sound-soft} boundary $\Gamma=\{(x_1,0)|x_1\leq 0\}\cup
\{(0,x_2)|-h\leq x_2\leq 0\}\cup \{(x_1,-h)|x_1\geq 0\}$ for $h>0$. A new radiation condition will be proposed in subsection \ref{subsec1} and our main result, Theorem \ref{Theorem}, will be proved in subsection \ref{subsec2}.

\subsection{Radiation condition}\label{subsec1} Without loss of generality, we
suppose that the incident angle $\theta\in(0, \pi/2)$. This means that the
incoming wave is incident onto $\Gamma$ from the left hand side of the upper
half plane.
We divide the region $\Omega_\Gamma$ into three parts
$\Omega_\Gamma=\Omega^L\cup \Omega^M\cup \Omega^R$ with (see Figure \ref{fig2})
\ben
&&\Omega^L:=\{x\in \Omega_\Gamma: x_2>x_1\tan\theta\},\\
&&\Omega^R:=\{x\in\Omega_\Gamma: x_2<x_1\tan\theta-h/\cos^2\theta\},\\
&&\Omega^M:=\{x\in\Omega_\Gamma:
x_1\tan\theta-h/\cos^2\theta<x_2<x_1\tan\theta\}.\enn

These domains are separated by the following two rays
\ben
&&\ML:=\{x=s(\cos\theta,\sin\theta): s>0\},\\
&& \mathcal{L}':=\{x=s(\cos\theta,\sin\theta)+(0,-h/\cos^2\theta): s>0\}.
\enn

\begin{figure}[!ht]
  \centering
  \includegraphics[width=0.5\textwidth,  viewport=0.pt 130.pt 1000.pt
  750.pt,clip]{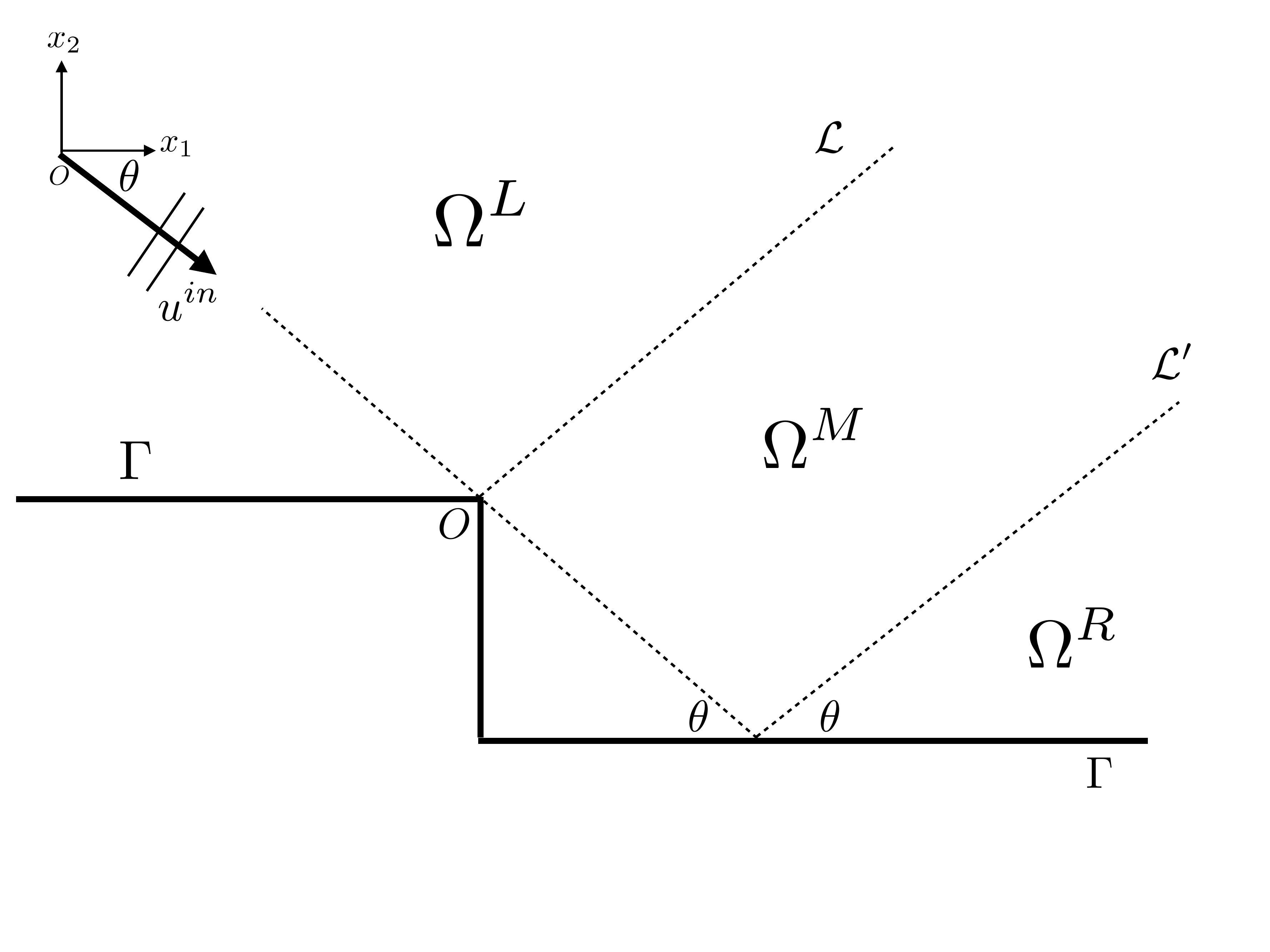}
  \caption{Illustration of the domains $\Omega^L$, $\Omega^R$ and $\Omega^L$
    separated by two rays ${\color{rot}\cal L}$ and ${\color{rot}\cal L}'.$ These domains are determined by the incident angle $\theta\in(0,\pi/2)$ and the height $h$ of $\Gamma$.}
  \label{fig2}
\end{figure}
Denote by $S_r^\eta$ ($\eta=L,M,R$) the restriction of the {\color{rot} circular curve} $S_r$ to $\Omega^\eta$, that is, $S_r^\eta=\{x:  x\in S_r\cap \Omega^\eta \}$.
Let $u^{tot}_L$, $u^{tot}_R$ be the uniquely determined total fields incited by the plane wave $u^{in}$ incident on the sound-soft straight lines $\{x_2=0\}$ and $\{x_2=-h\}$, respectively.  We refer to \cite{S.P, CWEL, ZC98, ZC2003} for uniqueness and existence of the solution in H\"older continuous spaces or in weighted Sobolev spaces using integral equation or variational methods.
Mathematically, they are given explicitly by
\ben
&&u^{tot}_L={\color{rot} u^{in} + u^{re}}=e^{i(\alpha x_1-\beta x_2)}-e^{i (\alpha x_1+\beta x_2) },\\
&&u^{tot}_R={\color{rot} u^{in} + u^{re}_h}=e^{i(\alpha x_1-\beta x_2)}-c_h\,e^{i (\alpha x_1+\beta x_2) },
\enn where $\alpha:=k{\color{rot}\cos\theta}, \beta:=k{\color{rot}\sin\theta}$, $c_h:=e^{2i\beta h}$.
Denote by $u^{tot}$ the total field to our scattering problem.
To introduce our radiation condition, we need to define
 two functions
\be\label{v}
v:=\left\{\begin{array}{lll}
u^{tot}-u^{tot}_L&&\mbox{in}\quad \Omega^L,\\
u^{tot}-u^{tot}_R &&\mbox{in}\quad\Omega^M\cup\Omega^R;
\end{array}\right.\\ \label{v'}
v':=\left\{\begin{array}{lll}
u^{tot}-u^{tot}_L&&\mbox{in}\quad \Omega^L\cup\Omega^M,\\
u^{tot}-u^{tot}_R &&\mbox{in}\quad\Omega^R.
\end{array}\right.
\en
Obviously, $v$ coincides with $v'$ in $\Omega^L\cup\Omega^R$ and $v$ differs from $v'$ only over the region $\Omega^M$.
We remark that, since $u^{tot}\in H^1_{loc}(\Omega_\Gamma)$,  the function $v$ is
discontinuous on ${\color{rot} {\cal L}}$, whereas $v'$ is discontinuous on
${\color{rot} {\cal L}}'$. More precisely, it holds that
\be\label{jump1}\begin{split}
v^+-v^-=(1-c_h) e^{i(\alpha x_1+\beta x_2)}\quad&&\mbox{on}\quad \ML,\\
(v')^+-(v')^-=(1-c_h)e^{i(\alpha x_1+\beta x_2)}\quad&&\mbox{on}\quad \ML'.
\end{split}\en Here, the notation $(\cdot)^\pm$ denote respectively the limits
taking from left and right hand sides. However, the normal derivatives of $v$
(resp. $v'$) are continuous when getting across ${\color{rot} {\cal L}}$ (resp.
${\color{rot} {\cal L}}'$), that is,
\be\label{jump2}\begin{split}
\partial_\nu^+v-\partial_\nu^-v=0\quad&&\mbox{on}\quad \ML,\\
\partial_\nu^+(v')-\partial_\nu^-(v')=0\quad&&\mbox{on}\quad \ML'.
\end{split}
\en
In the following lemma we show that the half-plane Sommerfeld radiation conditions of $v$ and $v'$ are equivalent.
\begin{lemma}\label{lem:radiation} The function
$v$ satisfies the Sommerfeld radiation condition (\ref{RC}) if and only if $v'$ satisfies (\ref{RC}).
\end{lemma}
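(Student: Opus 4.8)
The plan is to exploit that $v$ and $v'$ are assembled from the \emph{same} total field $u^{tot}$ and coincide away from the middle sector $\Omega^M$, so that their difference is an explicit plane wave supported on $\Omega^M$, and then to show that this plane wave is harmless for the Sommerfeld radiation condition because $\Omega^M$ is asymptotically aligned with the direction $d'=(\cos\theta,\sin\theta)$ along which it propagates.

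First I would read off from (\ref{v})--(\ref{v'}) and the closed forms of $u^{tot}_L,u^{tot}_R$ that $w:=v-v'$ vanishes on $\Omega^L\cup\Omega^R$ and equals $u^{tot}_L-u^{tot}_R=(c_h-1)\,e^{i(\alpha x_1+\beta x_2)}=(c_h-1)\,e^{ik\,x\cdot d'}$ on $\Omega^M$; since $|c_h|=1$, the modulus $|w|\equiv|1-c_h|$ is constant. Hence on the circular arc $S_r$ the difference $v-v'$ is supported on $S_r^M$, and writing $v'=v-w$ on $S_r^M$ (and $v'=v$ elsewhere) a one-line triangle-inequality estimate gives $\int_{S_r}|v'|^2\,ds\le 2\int_{S_r}|v|^2\,ds+2\int_{S_r^M}|w|^2\,ds$ and $\int_{S_r}|\partial_r v'-ikv'|^2\,ds\le 2\int_{S_r}|\partial_r v-ikv|^2\,ds+2\int_{S_r^M}|\partial_r w-ikw|^2\,ds$, together with the symmetric inequalities obtained by interchanging $v$ and $v'$ (using $v=v'+w$). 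These reduce the whole lemma to two statements about $w$ alone: (i) $\sup_{r>0}\int_{S_r^M}|w|^2\,ds<\infty$; and (ii) $\int_{S_r^M}|\partial_r w-ikw|^2\,ds\to 0$ as $r\to\infty$.

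The only point that needs a little care is an elementary estimate on the arc $S_r^M$. Parametrising $x=r(\cos\phi,\sin\phi)$, the defining inequalities $x_1\tan\theta-h/\cos^2\theta<x_2<x_1\tan\theta$ of $\Omega^M$ turn into $\theta-\arcsin\!\big(h/(r\cos\theta)\big)<\phi<\theta$ once $r$ is large, so $S_r^M$ is a single arc of length $r\arcsin\!\big(h/(r\cos\theta)\big)=O(1)$ (and trivially $|S_r^M|\le 2\pi r$ for all $r$), which already yields (i) since $\int_{S_r^M}|w|^2\,ds=|1-c_h|^2\,|S_r^M|$. Moreover every $x\in S_r^M$ lies within perpendicular distance $h/\cos\theta$ of the line through $\ML$ (the strip between $\ML$ and $\ML'$ has that constant width), so the angle $\gamma(x)$ between $\hat x:=x/|x|$ and $d'$ satisfies $\sin\gamma(x)\le (h/\cos\theta)/r$, i.e. $\gamma(x)=O(1/r)$. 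Since $\nabla w=ik\,d'\,w$, on $S_r$ one has $\partial_r w-ikw=ik(\hat x\cdot d'-1)\,w$, hence on $S_r^M$, $|\partial_r w-ikw|=k\,(1-\cos\gamma(x))\,|1-c_h|\le\tfrac{k}{2}\gamma(x)^2|1-c_h|=O(1/r^2)$; multiplying by $|S_r^M|=O(1)$ gives $\int_{S_r^M}|\partial_r w-ikw|^2\,ds=O(1/r^4)\to0$, which is (ii). Combining (i)--(ii) with the reduction of the previous paragraph proves both directions, so $v$ satisfies (\ref{RC}) if and only if $v'$ does. I expect the arc/angle estimate just described to be the only mildly delicate ingredient; everything else is bookkeeping with the triangle inequality.
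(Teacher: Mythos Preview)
Your proposal is correct and follows essentially the same route as the paper: both reduce the equivalence to showing that the difference $w=v-v'=(c_h-1)e^{i(\alpha x_1+\beta x_2)}\chi_{\Omega^M}$ satisfies $\int_{S_r^M}|w|^2\,ds=O(1)$ and $\int_{S_r^M}|\partial_r w-ikw|^2\,ds\to0$, and both obtain these from the fact that $S_r^M$ subtends an angle $O(1/r)$ about the direction $d'=(\cos\theta,\sin\theta)$. Your write-up is in fact slightly more explicit than the paper's (you spell out the triangle-inequality reduction and the arc-length estimate, and your $O(1/r^4)$ rate for the radiation integral is sharper than the $O(1/r^2)$ the paper records), but there is no substantive difference in method.
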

\begin{proof} We first note that the Sommerfeld radiation condition of $v$  are understood as
\ben
\int_{S_{r}}|\partial_r v-ik v|^2\,ds&=&\left(\int_{S_{r}^L }+\int_{S_r^M\cup S_r^R}\right) |\partial_r v-ik v|^2\,ds\rightarrow
0,\\
\int_{S_{r}}|v|^2\,ds&=&\left(\int_{S_{r}^R }+\int_{S_r^M\cup S_r^L}\right) |v|^2\,ds=O(1),
\enn
respectively, as $r=|x|\rightarrow \infty$. Set $w=v-v'$. Then $w={\color{rot}u^{re}-u_h^{re}}=(c_h-1)e^{i(\alpha x_1+\beta x_2)}$ in $\Omega^M$ and $w=0$ in $\Omega^L\cup\Omega^R$. Hence, we only need to prove that
\be\label{SRCM}
\int_{S_{r}^M}|\partial_r w-ik w|^2\,ds\rightarrow 0,\quad
\int_{S_{r}^M}|w|^2\,ds=O(1)\quad
\mbox{as}\quad r\rightarrow 0.
\en
Let $(r,\varphi)$ be the polar coordinate of $x$, and let $(r,\theta^*(r))$ be
the polar coordinate of the intersection point of $S_r$ and ${\color{rot}\cal L}'$.
Using the fact that
$
w(x)=w(r,\varphi)=(c_h-1) e^{ikr \cos(\theta-\varphi)}
$ together with the mean value theorem,
it is easy to see
\ben
\int_{S_{r}^M}|\partial_r w-ik w|^2\,ds&=&(c_h-1)^2\,r\int_{\theta^*(r)}^{\theta} | ik \cos(\theta-\varphi)-ik|^2\,d\varphi \\
&=&(c_h-1)^2\,r\,k^2\int_{\theta^*(r)}^{\theta} | \cos(\theta-\varphi)-1|^2\,d\varphi \\
&=& O(1/r^2)
\enn
as $r\rightarrow\infty$, since $|\theta^*(r)-\theta|=O(1/r)$.
Analogously, the second relation in (\ref{SRCM}) follows from the inequality
\ben
\int_{S_{r}^M}|w|^2\,ds=(c_h-1)^2\,r\,\int_{\theta^*(r)}^{\theta} \,d\varphi=O(1)
\enn
as $r\rightarrow\infty$.
The proof of the lemma is complete.
\end{proof}

By Lemma \ref{lem:radiation}, our new radiation condition in $\Omega_\Gamma$ is defined as follows.
\begin{definition}\label{def}
The scattered field $u^{tot}-u^{in}$ is said to be outgoing if the function $v$ or $v'$ satisfies the half-plane Sommerfeld radiation condition (\ref{RC}).
\end{definition}

Below we present several remarks concerning this new radiation solution.
\begin{remark}\label{rem:2.3}
\begin{itemize}
\item[(i)] Obviously, the definition of our outgoing radiation condition depends on the incident angle $\theta$ and the height $h$ of the scattering surface $\Gamma$ in the vertical direction. If $h=0$ (i.e., $\Gamma$ is a local perturbation of the original straight line $\{x_1=0\}$), then we see $u_L^{tot}=u_R^{tot}$ and thus the proposed radiation condition could be reduced to the usual condition for scattering problems in a locally-perturbed half-plane {\color{rot}(see e.g., \cite{Bao00, baohuyin18, Jin98, Li2010, Willers1987, Wood2006})}. Moreover, we remark that $u-u^{in}$ still satisfies the Angular Spectrum Representation (\ref{ASP}) for general rough surface problems.
\item[(ii)] In the definition of $v$ (see (\ref{v})), the ray $\ML$  can be replaced by another ray lying in $\Omega_\Gamma$ which starts from any point on $\Gamma$ with the direction $(\cos\theta,\sin\theta)$.
    By the proof of Lemma \ref{lem:radiation},
     the Sommerfeld radiation condition of $v$ is also equivalent to that of the new function defined in the modified domain. Such substitution also applies to $v'$ and the ray $\ML'$.
\end{itemize}
\end{remark}

Suppose that $u-u^{in}$ is an outgoing radiation solution. {\color{rot} By Definition \ref{def},} the total field {\color{rot}$u^{tol}$} can be decomposed into
\be\label{decomposition}
u^{tot}=u_L^{tot}+v\quad\mbox{in}\quad\Omega^L,\qquad
u^{tot}=u_R^{tot}+v\quad\mbox{in}\quad \Omega^M\cup\Omega^R,
\en
where $u_L^{tot}$ and $u_R^{tot}$ are defined in (\ref{utotal}) and the function $v$  fulfills the {\color{rot}Sommerfeld} radiation condition (\ref{RC}).

The main results of this section is stated in the following theorem. Its proof will be carried out in the subsequent section.
\begin{theorem}\label{Theorem}
Assume that the scattering interface $\Gamma$ is given by (\ref{eq:Gamma}) and $u^{in}=e^{ikx\cdot d}$ is a plane wave.
Then the scattering problem (\ref{eq:hlm})-(\ref{eq:bc}) admits a unique solution of the form (\ref{decomposition}). Moreover, it is the unique solution in the
weighted Sobolev space {\color{rot} $H_\varrho^1({\color{rot} \Omega_a })$ for any $a>0$ and $\varrho\in(-1,-1/2)$}.
\end{theorem}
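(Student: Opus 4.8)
The plan is to prove uniqueness and existence separately, in both cases reducing the trapezoidal problem to a problem on a bounded domain coupled with an exterior representation, and then invoking the known well-posedness theory for the half-plane problems associated with $u^{tot}_L$ and $u^{tot}_R$. First I would set up the geometry: fix a large ball $B_\rho$ centred at the origin whose radius $\rho$ is large enough that the corner of $\Gamma$ at $(0,0)$ and the corner at $(0,-h)$, together with the vertical segment $V_h$, are contained in $B_\rho$. Inside $\Omega_\Gamma\cap B_\rho$ I would work with $u^{tot}$ directly; outside, I would use the decomposition \eqref{decomposition}, so that the unknown "radiating part" is the single function $v$ satisfying the half-plane Sommerfeld radiation condition \eqref{RC}. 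The jump relations \eqref{jump1}--\eqref{jump2} across $\mathcal L$ (and the equivalent ones across $\mathcal L'$ from Lemma~\ref{lem:radiation}) tell me exactly how $v$ fails to be globally $H^1$: it has a prescribed jump $(1-c_h)e^{i(\alpha x_1+\beta x_2)}$ across one ray but continuous normal derivative, so $v$ is a genuine weak solution of the Helmholtz equation in $\Omega_\Gamma$ in the distributional sense away from that ray, with a controlled single-layer-type defect. I would couple a finite element formulation on $\Omega_\Gamma\cap B_\rho$ (with the Dirichlet condition $u^{tot}=0$ on $\Gamma$) to a boundary integral representation of $v$ on the exterior half-plane region via the Dirichlet-to-Neumann (DtN) operator on the artificial curve $S_\rho=\partial B_\rho\cap\Omega_\Gamma$; because $v$ is outgoing in the half-plane sense, this DtN operator is the classical one for the upper half plane with a sound-soft planar boundary, whose mapping properties and Gårding inequality are standard.

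For uniqueness, I would take $u^{in}=0$ and show $u^{tot}\equiv 0$. Setting $u^{in}=0$ forces $u^{tot}_L=u^{tot}_R=0$, hence $v=v'=u^{tot}$ everywhere, and in particular $u^{tot}$ itself satisfies \eqref{RC} and is globally $H^1_{loc}$, so there is no jump to worry about. Then $u^{tot}$ solves the homogeneous Dirichlet problem in $\Omega_\Gamma$ with the half-plane SRC, and the region $\Omega_\Gamma$ satisfies the shape assumption \eqref{shape-assumption}; I would apply a Rellich/unique-continuation argument of the type used for the locally perturbed half-plane (integrate $\mathrm{Im}\int_{S_\rho}\bar u\,\partial_r u$, use \eqref{RC} to kill the boundary term, conclude the far-field vanishes, then unique continuation from the infinite horizontal portions of $\Gamma$) to get $u^{tot}\equiv 0$. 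The only subtlety is that $\Omega_\Gamma$ has two corners and an L-shaped indentation rather than a smooth perturbation, but these are compact features that do not affect the Rellich identity on large circles, and the sound-soft boundary gives $u^{tot}=0$ on the infinite flat pieces, which is more than enough to start the continuation.

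For existence, I would produce $v$ (equivalently $u^{tot}$ via \eqref{decomposition}) by the coupled FEM–BIE variational scheme. The source terms are explicit: substituting \eqref{decomposition} into \eqref{eq:hlm}--\eqref{eq:bc} shows that $v$ satisfies the Helmholtz equation in each of $\Omega^L$ and $\Omega^M\cup\Omega^R$, with transmission data across $\mathcal L$ given precisely by \eqref{jump1}--\eqref{jump2}, and with inhomogeneous Dirichlet data on $\Gamma$ equal to $-(u^{tot}_L$ or $u^{tot}_R)$ restricted to $\Gamma$ — which, because $u^{tot}_L$ vanishes on $\{x_2=0\}$ and $u^{tot}_R$ vanishes on $\{x_2=-h\}$, is supported only on the compact part of $\Gamma$ (the segment $V_h$ and small neighbourhoods of the two corners, where $u^{tot}_L$ or $u^{tot}_R$ need not vanish). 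Thus $v$ has compactly supported boundary/transmission data, so I would seek $v\in H^1_\varrho(\Omega_a)$ for $\varrho\in(-1,-1/2)$; the decay built into this weighted space is compatible with \eqref{RC}. Writing the weak form on $\Omega_\Gamma\cap B_\rho$ with the DtN condition on $S_\rho$, the sesquilinear form is the sum of a coercive (up to compact perturbation) FEM part and the half-plane DtN part; Gårding's inequality plus the already-established uniqueness yields existence by the Fredholm alternative. The main obstacle, and the step I would spend the most care on, is making the coupling rigorous in the \emph{weighted} space $H^1_\varrho$ rather than in $H^1$: one must verify that the half-plane DtN operator (defined via \eqref{ASP}/UASR) is bounded and satisfies the required Gårding estimate between the correct weighted trace spaces on $S_\rho$ and that the jump data $(1-c_h)e^{i(\alpha x_1+\beta x_2)}$, which does \emph{not} decay along $\mathcal L$, nevertheless produces a $v$ in $H^1_\varrho$ — this is exactly where Lemma~\ref{lem:radiation} is used, since replacing $\mathcal L$ by $\mathcal L'$ (Remark~\ref{rem:2.3}(ii)) relocates the non-decaying jump into whichever region makes the functional-analytic setup cleanest, and the estimates in the proof of Lemma~\ref{lem:radiation} (the $O(1/r^2)$ and $O(1)$ bounds on $S_r^M$) show the discrepancy $w=v-v'$ is harmless. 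Finally, I would note that any $H^1_\varrho$ solution for some admissible $\varrho$ is, by the uniqueness argument, \emph{the} solution, giving the last sentence of the theorem.
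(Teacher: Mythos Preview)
Your overall architecture---truncate to a bounded region, couple a variational formulation there to an exterior integral representation of the outgoing part $v$, establish Fredholmness, prove uniqueness from $u^{in}=0\Rightarrow v=u^{tot}$ satisfies \eqref{RC}, and conclude by the Fredholm alternative---is the same as the paper's. Two concrete points, however, separate your sketch from a working proof, and they are precisely where the paper's effort is concentrated.

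First, the exterior DtN map you invoke does not exist in the form you describe. Outside any ball $B_\rho$, the domain $\Omega_\Gamma$ is bounded below by $\{x_2=0\}$ on the left and by $\{x_2=-h\}$ on the right; it is \emph{not} the exterior of a disc in a single half-plane, so there is no ``classical DtN for the upper half plane with a sound-soft planar boundary'' that applies to $v$ on $S_\rho$. The paper sidesteps this by using, as the kernel of its exterior representation, the Green's function $\Phi(x;z)$ of the \emph{full trapezoidal problem} itself. That $\Phi$ exists and, crucially, satisfies the half-plane SRC \eqref{RC} is the content of Proposition~\ref{Prop}; this is what makes the representation \eqref{vf} of $v$ and the boundary integral equation \eqref{Variation-2} on $S_R=S_1\cup S_2$ legitimate.

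Second, you correctly flag the non-decaying jump $(1-c_h)e^{i(\alpha x_1+\beta x_2)}$ along $\mathcal L$ as the crux, but relocating the ray via Lemma~\ref{lem:radiation} or Remark~\ref{rem:2.3}(ii) does not make the jump decay---it merely moves it to another infinite ray. When the exterior representation is actually written down via Green's formula in $\Omega^L$ and $\Omega^M\cup\Omega^R$ separately and then summed, this jump produces an additional right-hand-side term
\[
f(x)=(1-c_h)\lim_{R_1\to\infty}\int_{\mathcal L_{R_1}} e^{i(\alpha y_1+\beta y_2)}\,\partial_\nu\Phi(x;y)\,ds(y),
\]
an improper integral along the infinite ray $\mathcal L$ with a purely oscillatory (non-decaying) weight. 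Proving that this limit exists and yields $f|_{S_R}\in H^{1/2}$ is the principal new analytic ingredient of the paper; the entire Appendix is devoted to it, using the UPRC representation of $\Phi$, the decomposition $\Phi=G(\cdot,z)-G(\cdot,z_h^*)+v$, and delicate oscillatory-integral estimates (a change of variables $t=d(s,y_1)+s$ followed by integration by parts) to extract the needed decay uniformly in $y_1$. This step has no analogue in the locally perturbed case (there $c_h=1$ and $f\equiv 0$), and your proposal does not address it. A minor related point: $v$ is discontinuous across $\mathcal L$, so it is $u^{tot}$, not $v$, that lies in $H^1_\varrho(\Omega_a)$.
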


\subsection{\tcr{Proof of Theorem \ref{Theorem}}}\label{subsec2}
Let $G(x,z)=\frac{i}{4}H_0^{(1)}(k|x-y|)$ be the free-space Green's function to the Helmholtz equation, where $H_0^{(1)}$ is the Hankel function of the first kind of order zero. Suppose that
$u^{in}(x,y)=G(x,y)$ is an incoming point source wave emitting from the source position located by $y\in \Omega_\Gamma$.
By \cite{CWEL},
there admits a unique scattered field belonging to the space $H_\varrho^1({\color{rot} \Omega_a })$
for any $a>0$ and $\varrho\in(-1,0)$, which satisfies the {\color{rot} UASR} (\ref{ASP}) in $x_2>a$.
Let $\Phi(x,y)$ ($y\in \Omega_\Gamma$) be the unique total field caused by the incoming point source wave $u^{in}(x,y)$.  Obviously, $\Phi(x,y)$ can be regarded as the Green's function to our scattering problem.
The proof of Theorem \ref{Theorem} relies essentially on the following proposition.
\begin{proposition}\label{Prop}
The Green's function $\Phi(x,y) (y\in \Omega_\Gamma)$ fulfills the half-plane Sommerfeld radiation condition (\ref{RC}).
\end{proposition}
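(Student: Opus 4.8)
Since the incident point source $u^{in}(\cdot,y)=G(\cdot,y)$ is itself an outgoing cylindrical wave that trivially satisfies (\ref{RC}), it is enough to prove that the scattered part $\Phi^{sc}(\cdot,y):=\Phi(\cdot,y)-G(\cdot,y)$ — which by \cite{CWEL} lies in $H^1_\varrho(\Omega_a)$ for every $a>0$ and $\varrho\in(-1,0)$ and fulfils the UASR (\ref{ASP}) — also satisfies (\ref{RC}). The plan is to represent $\Phi^{sc}$ as a single-layer potential over $\Gamma$, to upgrade the weighted-$L^2$ a~priori bound on its boundary density to a pointwise decay rate along the two horizontal half-lines, and then to read off (\ref{RC}) from the far-field behaviour of $G$. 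Conceptually, what makes this possible, and what fails for plane-wave incidence, is that a point source generates off each flat part of $\Gamma$ only a decaying cylindrical reflection, so no non-decaying plane wave survives at infinity; a direct stationary-phase analysis of (\ref{ASP}) would not by itself suffice, because $\widehat{\Phi^{sc}_a}$ has an inverse-square-root branch point at $\xi=\pm k$ exactly as in the flat-surface case, whereas the flat-surface field does obey (\ref{RC}).

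\emph{Representation and decay of the density.} Because both $\Phi(\cdot,y)$ and $G$ radiate upward in the UASR sense, Green's second identity on $\Omega_a$ (letting $a\to\infty$, at which point the horizontal boundary term vanishes by the radiation behaviour) together with $\Phi(\cdot,y)=0$ on $\Gamma$ gives, for $x\in\Omega_\Gamma$,
\[
\Phi^{sc}(x,y)=\int_\Gamma G(x,z)\,\phi_y(z)\,ds(z),\qquad \phi_y:=\partial_{\nu}\Phi(\cdot,y)|_\Gamma ,
\]
the justification for the unbounded, eventually-flat $\Gamma$ (including the corners at $(0,0)$ and $(0,-h)$, where $\phi_y$ has at worst an integrable $L^p$ singularity) following the rough-surface representation machinery of \cite{CWEL,ZC98}. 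Taking the Dirichlet trace recasts this as the first-kind equation $S_\Gamma\phi_y=-G(\cdot,y)|_\Gamma$ for the single-layer operator $S_\Gamma$, invertible on the relevant weighted spaces by \cite{CWEL}. The heart of the argument is then to show $\phi_y\in L^1(\Gamma)$. The data $-G(\cdot,y)|_\Gamma$ decays like $|z_1|^{-1/2}e^{ik|z_1|}$ along $\Gamma_0=\{x_2=0,\ x_1\le0\}$ and $\Gamma_h=\{x_2=-h,\ x_1\ge0\}$; on these flat rays $S_\Gamma$ acts, after Fourier transform in $z_1$, essentially as multiplication by $i/(2\sqrt{k^2-\xi^2})$, so that $S_\Gamma^{-1}$ behaves like a first-order operator whose symbol $\sim\sqrt{k^2-\xi^2}$ precisely cancels the branch behaviour of $\widehat{G(\cdot,y)}$ at $\xi=\pm k$. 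Exploiting this cancellation — via mapping properties of $S_\Gamma^{-1}$ in pointwise-weighted norms, and by comparing $\phi_y$ with the explicit flat-half-plane normal derivatives $2\partial_{x_2}G(\cdot,y)|_{\{x_2=0\}}$ and $2\partial_{x_2}G(\cdot,y)|_{\{x_2=-h\}}$ (the difference solving a boundary integral equation whose right-hand side, being the mismatch localized near $V_h$, decays faster) — I expect $|\phi_y(z)|=O(|z_1|^{-3/2})$ as $|z_1|\to\infty$ on $\Gamma_0\cup\Gamma_h$, hence $\phi_y\in L^1(\Gamma)$.

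\emph{From an $L^1$ density to (\ref{RC}), and the main obstacle.} With $\phi_y\in L^1(\Gamma)\cap L^2_{loc}(\Gamma)$ and $\Gamma$ straight outside a bounded set, the logarithmic and $|x-z|^{-1/2}$ bounds on $G$ show $\Phi^{sc}(\cdot,y)$ is bounded on $\Omega_\Gamma$, giving $\sup_{r>0}\int_{S_r}|\Phi^{sc}|^2\,ds<\infty$; and writing $\partial_r\Phi^{sc}-ik\Phi^{sc}=\int_\Gamma(\partial_r^x-ik)G(x,z)\phi_y(z)\,ds(z)$ and splitting $\Gamma$ into $|z|<|x|/2$ (controlled by $\|\phi_y\|_{L^1}$ and $(\partial_r^x-ik)G=O(|x-z|^{-3/2})$) and $|z|\ge|x|/2$ (controlled by the $|z_1|^{-3/2}$ tail), one obtains $\int_{S_r}|\partial_r\Phi^{sc}-ik\Phi^{sc}|^2\,ds=O(r^{-1})\to0$. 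The genuinely hard step is the density estimate: turning the variational weighted-$L^2$ bound of \cite{CWEL} into the pointwise rate $|z_1|^{-3/2}$, i.e.\ controlling $S_\Gamma^{-1}$ near the resonant wavenumbers $\xi=\pm k$ on the eventually-flat curve $\Gamma$. A lesser technical point is keeping the last estimates uniform near the two grazing directions $\varphi\to0^+,\ \pi^-$, where $S_r$ meets $\Gamma_0$ and $\Gamma_h$ and the $ik\Phi^{sc}$ term is only $O(r^{-1/2})$; there one checks that its leading oscillatory part is cancelled by $\partial_r\Phi^{sc}$, consistently with the boundary identity $\Phi^{sc}=-G(\cdot,y)$ on $\Gamma$.
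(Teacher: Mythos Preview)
The paper does not supply a self-contained proof of this proposition. Immediately after stating it, the authors cite \cite[Theorem~5.1]{CRZ1998} and, as the operative reference, \cite[Lemma~2.2]{Hu2018}, which for general rough surfaces establishes $\Phi(\cdot,y)\in H^1_\varrho(\Omega_a\cap\{|x_1|>R\})$ for every $|\varrho|<1$ (so in particular with a \emph{positive} weight, i.e.\ genuine horizontal decay) together with the Sommerfeld condition on $S_r\cap\{x_2\ge a\}$ for any $a>\max_{\Gamma}x_2$. The only argument added here is the remark that one may take $a\ge 0$ by continuity of the solution up to $\{x_2=0\}$. So the paper's ``proof'' is a citation, and what it relies on is a weighted-Sobolev field estimate, not a layer-potential density estimate.

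Your route via the representation $\Phi^{sc}(x,y)=\int_\Gamma G(x,z)\,\phi_y(z)\,ds(z)$ with $\phi_y=\partial_\nu\Phi(\cdot,y)|_\Gamma$ is a genuinely different and more concrete strategy, and the guiding intuition---that a point source reflects off each flat ray as a decaying cylindrical wave, so no surviving plane wave obstructs (\ref{RC})---is correct. The gap, however, is exactly the step you yourself flag: you write ``I expect $|\phi_y(z)|=O(|z_1|^{-3/2})$'' and do not prove it. The tools you invoke do not close it. First, \cite{CWEL} gives solvability and bounds only in weighted Sobolev spaces, not the pointwise-weighted mapping property your symbol heuristic ``$S_\Gamma^{-1}\sim\sqrt{k^2-\xi^2}$'' would require on the piecewise-flat $\Gamma$; translating a weighted $H^{1/2}_\varrho$ bound into an $|z_1|^{-3/2}$ pointwise rate is precisely the missing analysis. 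Second, your comparison scheme---subtract the explicit flat-half-plane normal derivatives on $\Gamma_0$ and $\Gamma_h$ and argue that the remainder solves an integral equation with ``better'' data localized near $V_h$---is only a heuristic: the two flat problems live on different lines than $\Gamma$, so the subtraction does not produce a clean equation on $\Gamma$, and how a compactly supported mismatch propagates through $S_\Gamma^{-1}$ along the infinite flat rays (in particular near the resonant Fourier frequencies $\xi=\pm k$) is exactly the question you set out to answer. In short, your plan is reasonable but, as written, it is a programme rather than a proof; the approach the paper appeals to in \cite{Hu2018} bypasses the density analysis entirely by working at the level of the field in weighted Sobolev norms, using that the Dirichlet data $G(\cdot,y)-G(\cdot,y^*_h)$ already decays like $|z|^{-3/2}$ on $\Gamma$.
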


\tcr{When $\Gamma$ is the graph of a $C^{1,1}$-smooth function,
the assertion of Proposition \ref{Prop} for a compactly supported source term is already contained
in \cite[Theorem 5.1]{CRZ1998} but without a detailed proof.
It was further proved in
\cite[Lemma 2.2]{Hu2018} for general rough surface scattering problems that  $\Phi(\cdot,y)\in H_\varrho^1({\color{rot} \Omega_a }\cap \{x\in \R^2:|x_1|>R\})$  for any $a>\max_{x\in \Gamma}\{x_2\}$, $|\varrho|<1$ and $R>|y_1|$, and that
\ben
\lim_{r\rightarrow\infty}\int_{S_{r}\cap\{x_2\geq a\}}|\partial_r \Phi-ik \Phi|^2\,ds\rightarrow0,\quad
\sup_{r>0}
\int_{S_r\cap\{x_2\geq a\}}|\Phi|^2\,ds<\infty, \quad r=|x|.
\enn
Moreover, the above radiation conditions are proved to be equivalent to the classical Sommerfeld radiation condition that
\[
\sqrt{r}\; (\partial_r \Phi-ik\Phi)\rightarrow 0\quad\mbox{as}\quad r\rightarrow\infty,\quad x_2\geq a.
\]
In our case of the non-locally perturbed surface $\Gamma$ given by (\ref{eq:Gamma}), we may choose $a\geq 0$ {\color{rot} because the solution is continuous up to the surface \{$x_2=0$\}.}
In the remaining part of this subsection we shall verify that the total field of our scattering problem for plane wave incidence can be decomposed into the form of (\ref{decomposition}). We will follow the coupling scheme of \cite{baohuyin18, Hsiao, Li2010}
between the finite element and boundary integral equation methods, but
modified to be applicable to our case in a non-locally perturbed half plane.}

{\color{rot} Set $\Omega_{R}^+=\{x: |x|<R\}\cap \Omega_\Gamma$.} Choose $R>h$
such that $k^2$ is not the eigenvalue of $-\Delta$ in $\Omega_R^+$; see Figure
\ref{Fig4}.
\begin{figure}[!ht]
  \centering
   \includegraphics[width=0.5\textwidth,  viewport=0.pt 175.pt 1000.pt
  800.pt,clip]{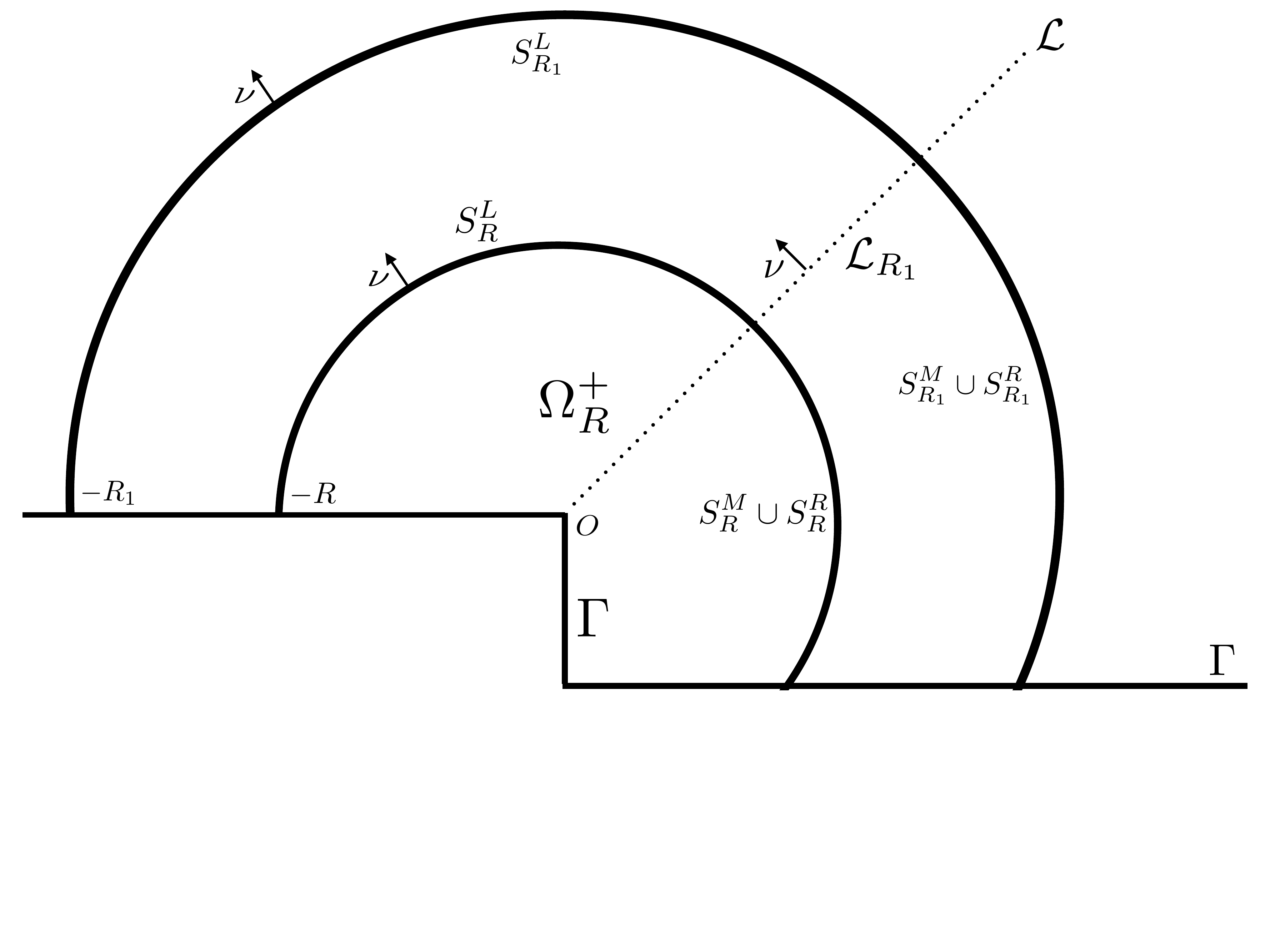}
  \caption{Illustration of the domain $\Omega_R^+$ and the annulus enclosed by $\Gamma$, $S_R$ and $S_{R_1}$. }
  \label{Fig4}
\end{figure}
Choose $R_1>R$. Using Green's formula,
 we may represent $v$ in $\Omega^L\cap\{x: R<|x|<R_1\}$  as the integral representation
\be\nonumber
v(x)&=&\left(\int_{S_R^L}-\int_{S_{R_1}^L}\right)\left[ v(y)\partial_{\nu}\Phi(x;y)-\partial_\nu v(y)\Phi(x;y)\right]\;ds(y)  \\ \label{vL}
&&{\color{rot}+}\int_{\ML_{R_1}} \left[ v^+(y)\partial_{\nu}\Phi(x;y)-\partial_\nu^+ v(y)\Phi(x;y)\right]\;ds(y),
\en with $\ML_{R_1}:=\{x\in \ML: R<|x|<R_1\}$ for $R_1>R$.
Here we have assumed that the normal {\color{rot}vector} $\nu$ on $\ML$ is directed into the left hand side and that on $S_R$ into the exterior $|x|>R$; see Figure \ref{Fig4}. Analogously, for $x\in(\Omega^R\cup\Omega^M)\cap\{x: R<|x|<R_1\}$ we get
\be\nonumber
v(x)&=&\left(\int_{S_R^R\cup S_R^M}-\int_{S_{R_1}^R\cup S_{R_1}^M }\right)\left[ v(y)\partial_{\nu}\Phi(x;y)-\partial_\nu v(y)\Phi(x;y)\right]\;ds(y)  \\ \label{vR}
&&{\color{rot}-}\int_{\ML_{R_1}} \left[ v^-(y)\partial_{\nu}\Phi(x;y)-\partial_\nu^- v(y)\Phi(x;y)\right]\;ds(y)\, .
\en
Adding (\ref{vL}) and (\ref{vR}) together and making use of the jump conditions of $v$ on $\ML$ (see (\ref{jump1}) and (\ref{jump2})), we obtain
\ben
v(x)&=&\left(\int_{S_R}-\int_{S_{R_1}}\right)\left[ v(y)\partial_{\nu}\Phi(x;y)-\partial_\nu v(y)\Phi(x;y)\right]\;ds(y)  \\
&&+{\color{rot}(1-c_h)}\int_{\ML_{R_1}} e^{i(\alpha y_1+\beta y_2)} \partial_{\nu}\Phi(x;y)\;ds(y),
\enn
for $x\in\Omega^L\cap\{x: R<|x|<R_1\}$. \tcr{
In view of the Sommerfeld radiations of $v$ and $G$, we may find that
\ben
&&\int_{S_{R_1}}  v(y)\partial_{\nu}\Phi(x;y)-\partial_\nu v(y)\Phi(x;y)\;ds(y) \\
&=&\int_{S_{R_1}} v(y) \left[\partial_{\nu}\Phi(x;y)-ik\Phi(x;y)\right]-\left[\partial_\nu v(y)-ikv(y)\right]\Phi(x;y)\;ds(y) \\
&\rightarrow& 0,
\enn
as $R_1\rightarrow\infty$.} Hence,
\be\label{vf}
v(x)=\int_{S_R} \left[ v(y)\partial_{\nu}\Phi(x;y)-\partial_\nu v(y)\Phi(x;y)\right]\;ds(y) +f(x)
\en for all $x\in \Omega_\Gamma{\color{rot} \backslash\overline{\Omega_R^+} }$,
where
\be\label{f}
f(x):={\color{rot} (1-c_h) }\lim_{R_1\rightarrow\infty} \int_{\ML_{R_1}} e^{i(\alpha y_1+\beta y_2)} \partial_{\nu}\Phi(x;y)\;ds(y).
\en
The proof of the existence of the limit on the right hand side of (\ref{f}) will be given in the Appendix.
 Note that $f$ vanishes identically if $h=0$ or  $u^{in}=0$, and that
   the integral on $S_R$ appearing in (\ref{vf}) is understood as the sum of those integrals over $S_R^L$ and $S_R^M\cup S_R^R$.
  Since the Green's function $\Phi$ is weakly singular, the jump relation for double layer potentials gives (cf. (\ref{jump1}))
 \[
 f^+-f^-=(1-c_h) e^{i(\alpha x_1+\beta x_2)}\quad \mbox{on}\quad \ML.
 \]
 For notational simplicity, we write $S_1=S_R^L$, $S_2=S_R^M\cup
 S_R^{ {\color{rot} R} }$ and define
 \[
 p_j:=(\partial_\nu v)|_{S_j}\in H^{-1/2}(S_j),\; v_j=v|_{S_j}\in  H^{1/2}(S_j),\,f_j=f|_{S_j}\in  H^{1/2}(S_j),\quad j=1,2.
 \]
 Taking the limit $x\rightarrow S_{R}$, we get
\be\label{Variation-2}
(I-\mathcal{D})\begin{pmatrix}
v_1 \\ v_2
\end{pmatrix}
+\mathcal{S} \begin{pmatrix}
p_1 \\ p_2
\end{pmatrix}=2\begin{pmatrix}
f_1 \\ f_2
\end{pmatrix}\quad\mbox{on}\quad S_1\times S_2.
\en
Here $I$ is the identify operator,  $\mathcal{D}$ and $\mathcal{S}$ are  defined by
\ben
\mathcal{D}=
\begin{pmatrix}
D_{11} & D_{12} \\ D_{21} & D_{22}
\end{pmatrix},\quad
(D_{ij} g)(x)&:=&2\int_{S_j} \partial_{\nu(y)} \Phi(x;y) g(y)\,ds(y) ,\qquad x\in S_i,\\
\mathcal{S}=\begin{pmatrix}
S_{11} & S_{12} \\ S_{21} & S_{22}
\end{pmatrix},\quad
(S_{ij} q)(x)&:=&2\int_{S_j} \Phi(x;y) q(y)\,ds(y),\qquad\qquad x\in S_i,
\enn
for $i,j=1,2$.
We remark that the jump relations for $D_{ij}$ and $S_{ij}$  remain valid,
 since $\Phi(\cdot; y)-G(\cdot; y)$ is of $C^\infty$-smoothness.
On the other hand, using integration by part we may find
\ben
\int_{\Omega_R^+} {\color{rot} \left[\nabla u^{tot}\cdot \nabla\,\overline{\varphi}-k^2u^{tot}\,\overline{\varphi}\right]}\,dx-\int_{S_R}\partial_\nu v\overline{\varphi}\,ds=
\int_{S_R}\partial_\nu (u^{tot}-v)\overline{\varphi}\,ds,
\enn
for all $\varphi\in H^1(\Omega_R^+)$ such that $\varphi=0$ on $\Gamma\cap \{x: |x|<R\}$. This implies that
\be\label{Variation-1}
\int_{\Omega_R^+} {\color{rot}\left[\nabla u^{tot}\cdot \nabla\,\overline{\varphi}-k^2u^{tot}\,\overline{\varphi}\right]}\,dx-\sum_{j=1}^2\int_{S_j}p_j\overline{\varphi}\,ds=\sum_{j=1}^2\int_{S_j} \partial_{\nu} w \overline{\varphi}\,ds\;:=\int_{S_R} \partial_{\nu} w \overline{\varphi}\,ds   ,
\en
where
the function $w$ is defined by (cf. (\ref{v}))
\be\label{w}
w:=u^{tot}-v=\left\{\begin{array}{lll}
u_L^{tot}\quad\mbox{in}\quad \Omega^L,\\
u_R^{tot}\quad\mbox{in}\quad\Omega^R\cup\Omega^M.
\end{array}
\right.
\en
Introduce the variational space $X=X_0\times X_1$, where
\ben
&&X_0=\left\{u\in H^1(\Omega_R^+):\; u=0\;\mbox{on}\;\Gamma\cap\{x: |x|<R\}\right\},\\
&&X_1:=H^{-1/2}(S_1)\times  H^{-1/2}(S_2).
\enn
Combining (\ref{Variation-1}) and (\ref{Variation-2}) gives the variational formulation for the unknown solution pair  $(u^{tot},p)\in X$ with $p=(p_1, p_2)^\top\in X_1$ as follows
\be\label{Variational}
A\left( (u^{tot},p), (\varphi,\chi) \right):=
\begin{pmatrix}
a_1\left( (u^{tot},p), (\varphi,\chi) \right)\\ a_2\left( (u^{tot},p), (\varphi,\chi) \right)
\end{pmatrix}
=\begin{pmatrix}
\int_{S_R}\partial_{\nu}w\,\overline{\varphi} ds\\
2\hat{f}+\int_{S_1\times S_2}(I-\mathcal{D})(\hat{w})\cdot \overline{\chi}\,ds
\end{pmatrix}
\en for all $(\varphi,\chi)\in X$ with $\chi=(\chi_1,\chi_2)^\top$,
where $\hat{f}:=(f|_{S_1}, f|_{S_2})^\top$ and
\ben
&&a_1\left( (u^{tot},p), (\varphi,\chi) \right):=\int_{\Omega_R^+} \nabla u^{tot}\cdot \nabla\,\overline{\varphi}-k^2 u^{tot}\,\overline{\varphi}dx-\sum_{j=1}^2\int_{S_j} p_j\,\overline{\varphi}\, ds, \\
&&a_2\left( (u^{tot},p), (\varphi,\chi) \right):=\int_{S_1\times S_2}\left[ (I-\mathcal{D})(\hat{u}^{tot})+\mathcal{S}p\right]\cdot\overline{\chi}\,ds.
\enn
{\color{rot}Note that $\hat{u}^{tot}:=({u}^{tot}|_{S_1}, {u}^{tot}|_{S_2})^\top$.}
Here we have used the notation
\[
\int_{S_1\times S_2}
\begin{pmatrix}
\xi_1 \\ \xi_2
\end{pmatrix} \cdot
\begin{pmatrix}
\eta_1 \\ \eta_2
\end{pmatrix} ds
:=\begin{pmatrix}
\int_{S_1} \xi_1\eta_1\,ds \\
\int_{S_2} \xi_2\eta_2\,ds
\end{pmatrix}.
\]

In comparison with the variational formulation for local perturbation scattering problems (see e.g., \cite{Hu2018, Li2010}), we have an additional term $\hat{f}$ appearing on the right hand side of (\ref{Variational}), due to the fact that $h>0$. In addition, the integral over $S_R$ is split into the sum of corresponding integrals over $S_1$ and $S_2$, because of the unknown functions $p_1$ and $p_2$.
 The operator $A$ takes the same form as the case of a locally perturbed half plane. Hence, using mapping properties of $D_{ij}$, $S_{ij}$ and arguing analogously to \cite{Hu2018}, one can prove that
 $A: X\rightarrow X^*$ is a Fredholm operator with index zero. We omit the details, since the proof of \cite{Hu2018, Li2010} carries over to our case easily.

  To prove uniqueness, we assume that $u^{in}=0$. This implies that $w=0$ and $f=0$; {\color{rot} recall (\ref{f}) and (\ref{w}) for the definition}. Hence, the right hand side of the variational formulation (\ref{Variational}) vanishes and $u^{tot}=v$ fulfills the radiation condition (\ref{RC}).
  Since $k^2$ is not the  Dirichlet eigenvalue of $-\Delta$, one can extend the solution {\color{rot}$u^{tol}$} of the homogeneous boundary value problem
$A\left( ({\color{rot}u^{tol}},p), (\varphi,\chi) \right)=0$ for all $(\varphi,\chi)\in X$ from $\Omega_R^+$ to the whole half plane $\Omega_\Gamma$, which also satisfies the Sommerfeld radiation condition (\ref{RC}).
The extended solution also fulfills the Angular Spectrum Representation. Hence, by uniqueness to rough surface scattering problem under the geometrical condition (\ref{shape-assumption}) (see \cite{CWEL}), we get $u^{tot}=0$, which proves the uniqueness of solutions to the problem (\ref{Variational}). Existence follows straightforwardly from Fredholm alternative theory.
This finishes the proof of Theorem \ref{Theorem}. \hfill$\Box$

Once $u^{tot}\in H^1(\Omega_R)$ (and thus $v$) is obtained from (\ref{Variational}), the solution $u^{tot}$ can be extended from $\Omega_R^+$ to the region $|x|>R$ via $u^{tot}=w+v$, where $v$ is expressed by (\ref{vf}) in terms of the trace of $v$ on $S_R$ and the function $f$.

\begin{remark}\label{remark}  Theorem \ref{Theorem} can be readily carried over to the following cases:
\tcr{
\begin{itemize}
\item[(i)] The incident angle $\theta\in (\pi/2,\pi)$, or the scattering
  interface is a local perturbation of $\Gamma$ defined by (\ref{eq:Gamma})
  (that is, the interface coincides with $\Gamma$ in the exterior of a compact
  set). The non-local surface shown in Figure \ref{Fig3} can be analogously
  treated as well. Note that our approach applies to a half-plane which
  satisfies the geometrical assumption (\ref{shape-assumption}). {\color{rot} As
    indicated by Remark \ref{rem:2.3} (ii), we may choose a ray starting from any point on
    $\Gamma$ with the direction $(\cos\theta,\sin\theta)$ as ${\cal L}$ to
    define $\Omega_L$ as the domain on the left of ${\cal L}$.
     The ray ${\cal L}'$ can be chosen
    as any ray
     parallel to and on the right hand side of ${\cal L}$.
    Then $\Omega_R$ is defined as the domain on the right of ${\cal L}'$ and $\Omega_M$
    as the middle domain between ${\cal L}$ and ${\cal L}'$. One easily checks
    that Lemma \ref{lem:radiation} is still valid.}
\item[(ii)] An inhomogeneous medium with compact contrast function is embedded into $\Omega_\Gamma$. In this case, the wave equation for the total field becomes $(\Delta+k^2 q(x)) u^{tot}=0$ in $\Omega_\Gamma$, where $q\in L^\infty(\Omega_\Gamma)$ and $D:=\mbox{Supp}(1-q)$ is a compact set of $\Omega_\Gamma$. The local perturbation is understood as the scattering effect due to the compactly supported inhomogeneity $q$.
\item[(iii)] The incoming wave is a point source wave emitted from some source position located in $\Omega_\Gamma$, that is, $u^{in}(x)=\Phi(x,z)$ for some $z\in \Omega_\Gamma$. Then the total field can be decomposed into the form (\ref{decomposition}) with
    \be\label{utotal}
u^{tot}_L(x)=G(x;z)-G(x;z'),\qquad
u^{tot}_R=G(x;z)-G(x;z'_h),
\en where $z'=(z_1,-z_2)$, $z'_h=(z_1,-2h-z_2)$ for $z=(z_1, z_2)$.
\end{itemize}}
\end{remark}
\begin{figure}[!ht]
  \centering
  \includegraphics[width=0.4\textwidth,  viewport=0.pt 130.pt 1000.pt
  750.pt,clip]{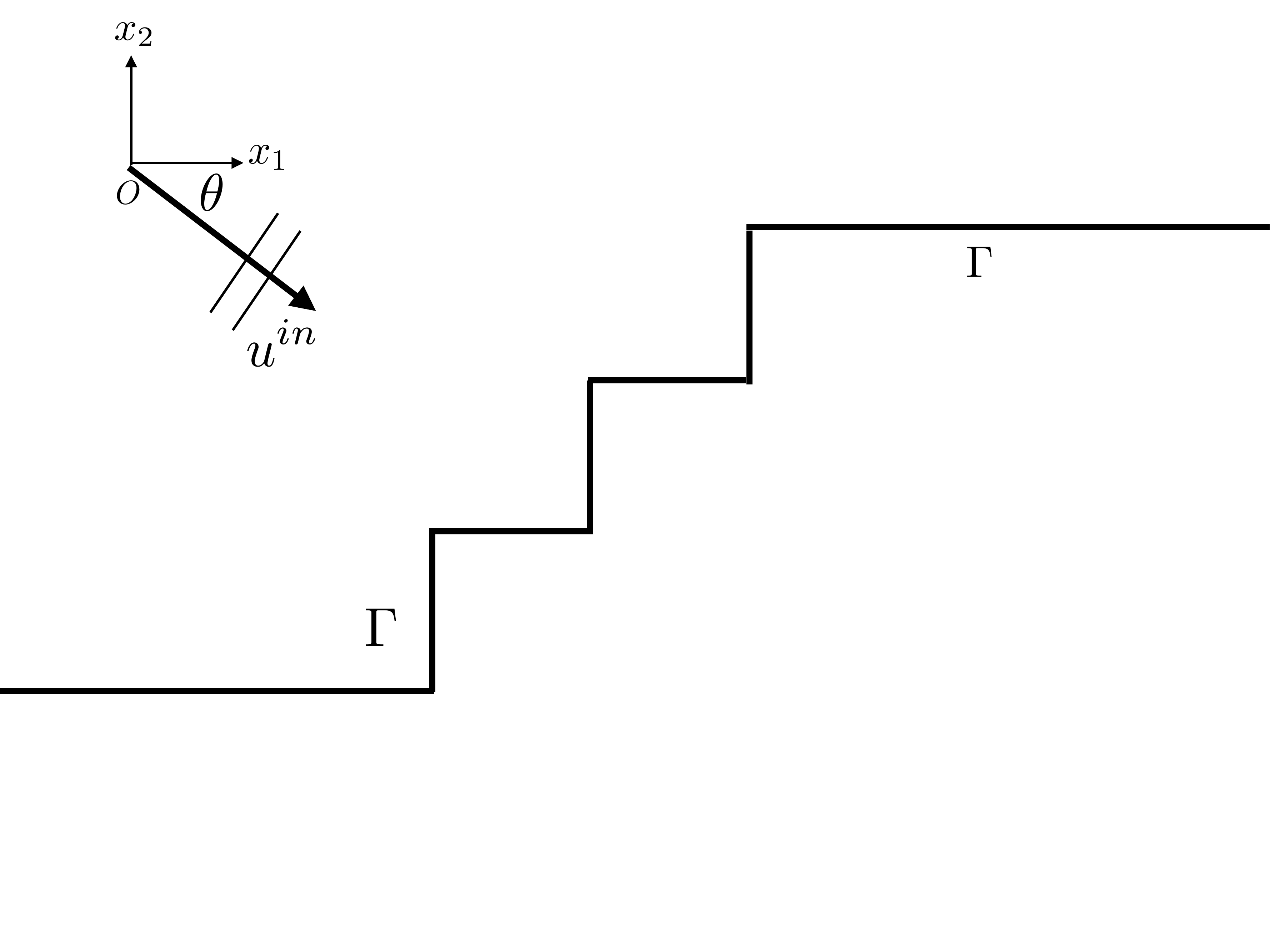}
  \caption{The scattering surface $\Gamma$ is a local perturbation of {\color{rot} the trapezoidal curve} $\{(x_1,0)|x_1\geq 0\}\cup V_h \cup \{(x_1,-h)|x_1\leq 0\}$. The well-posedness result of Theorem \ref{Theorem} extends to this case with a modified radiation condition depending on the angle of the incoming wave. }
  \label{Fig3}
\end{figure}
{\color{rot}\begin{remark}
The fundamental differences between the arguments in \cite{Hu2018} and the proof of Theorem \ref{Theorem} are summarised as follows. In \cite{Hu2018}, a similar coupling scheme was employed to establish well-posedness of time-harmonic acoustic scattering from a locally perturbed sound-soft periodic surface. The mathematical analysis there was mostly placed upon the justification of the Sommerfeld radiation condition of the Green's function (that is, Proposition \ref{Prop}). However, in this paper we consider an acoustic scattering problem in a globally perturbed half plane. Compared to the case of local perturbation, essential difficulties for trapezoidal surfaces arise from the additional term $\hat{f}$ on the right hand side of
the new variational formulation (\ref{Variational}). We have to prove the convergence of the limit in the definition of the function $f$ (see (\ref{f})) , which requires ingenious analysis to estimate the asymptotic behavior of the Green function $\Phi(x,z)$ as $|x|\rightarrow\infty$ uniformly in all $z\in S_R$; see the lengthy arguments in the Appendix for the details.
\end{remark}}

\tcr{The variational formulation established in this section is helpful to establish well-posedness of our scattering problem. However, it is hard to implement the resulting numerical scheme, due to the heavy computational cost on the background Green's function. Instead, a mode matching method will be adopted in the subsequent section to get numerical solutions, where the decomposition form (\ref{decomposition}) will be used to truncate the unbounded domain with an accurate boundary condition.}

\section{A numerical mode matching method}

{\color{rot} Without the knowledge of a precise radiation behavior of the total
  wavefield $u^{tot}$
  infinity, one cannot apply existing truncation
  techniques such as PML method, absorbing boundary condition (ABC) method,
  etc., to truncate $\Omega_\Gamma$ as we don't know at all what boundary
  conditions should be imposed after the truncation, let alone developing
  further numerical methods to compute $u^{tot}$! In this section, we will propose a
  numerical mode matching (NMM) method to compute $u^{tot}$ utilizing the newly
  proposed radiation condition.

  We remark that there are some major differences between the current work and
  \cite{luluson18}. In \cite{luluson18}, an NMM method has been developed for
  the scattering problem in a two-layer medium with a stratified inhomogeneity,
  where an outgoing wavefield is much easier to extract in terms of directly
  subtracting the background reference wavefield from the total wavefield so
  that the aformentioned truncation techniques can be easily applied. Moreover,
  a hybrid Robin-Dirichlet boundary condition was proposed therein on the PML
  boundary to make the mode expansion procedure applicable, but this in fact is
  unnecessary in some standard methods like FEM methods since one can simply put
  zero Dirichlet boundary condition on the whole PML boundary to terminate
  the outgoing wavefield. Unlike \cite{luluson18}, no uniform background
  reference wavefield is available for us to extract an outgoing wavefield for
  the scattering problem under consideration. Whichever one chooses, $u^{tot}_L$
  or $u^{tot}_R$, as the background reference wavefield, (\ref{decomposition})
  tells that neither $u^{tot}-u^{tot}_L$ nor $u^{tot}-u^{tot}_R$ is outgoing
  since the two difference wavefields contain plane waves parallel to $e^{i(
    \alpha x_1 + \beta x_2 )}$ in $\Omega_M\cup\Omega_R$ and $\Omega_L$,
  respectively. Nevertheless, one can use PMLs and then zero Dirichlet boundary
  condition to directly terminate the outgoing wavefield $v$, but one definitely
  meets a ``pseudointerface'' dependent on the incident
  angle, i.e., ray ${\cal L}$, across which $v$ is discontinuous. To avoid such a moving
  pseudointerface, we have two approaches: (1). Directly compute the partially
  outgoing wavefield $u^{tot}-u^{tot}_L$ (or $u^{tot}-u^{tot}_R$) in the whole
  computational domain; we must impose a hybrid Robin-Dirichlet boundary
  condition on the PML boundary to eliminate the reflection of the plane wave
  $e^{i(\alpha x_1 + \beta x_2)}$; (2). Setup a fixed pseudointerface, e.g.,
  $x_1=0$, and then compute $u^{tot}-u^{tot}_L$ on the left of the
  pseudointerface and $u^{tot}-u^{tot}_R$ on the right; like (1), we still need
  to impose a hybrid Robin-Dirichlet boundary condition on the PML boundary
  since one of the two still is partially outgoing. Consequently,
  Robin-Dirichlet boundary condition is necessary in any numerical methods
  unless one could tolerate the movement of pseudointerface as incident angle
  varies. Here, we propose to use the second approach and will develop an NMM
  method to compute $u^{tot}$. }

For simplicity, we assume that $\Gamma$ is defined in (\ref{eq:Gamma}) and there
is no
inhomogeneity above $\Gamma$; this NMM method is applicable as well for
$\Gamma$ locally perturbed with multiple vertical and horizontal segments
and with additional rectangular inhomogeneities above; see
Remark \ref{remark} (i) for details.

%

As  shown in Figure~\ref{fig:trap_pec}, the vertical $x_2$-axis splits $\Omega_{\Gamma}$
into two $x_1$-invariant mediums
\[
\Omega_1 = \Omega_{\Gamma}\cap \{(x_1,x_2)|x_1<0\},\quad{\rm and}\quad \Omega_2 = \Omega_{\Gamma}\cap \{(x_1,x_2)|x_1>0\}.
\]
Introduce the following function
\begin{align}
\label{eq:def:us}
u^{sc}:=\left\{\begin{array}{lll}
u^{tot}-u^{tot}_L&&\mbox{in}\quad \Omega_1,\\
u^{tot}-u^{tot}_R &&\mbox{in}\quad\Omega_2;
\end{array}\right.
\end{align}
 notice that $u^{sc}$ in general is not outgoing.
Then, $u^{sc}$ satisfies
\begin{align}
  \label{eq:hlm:s}
\Delta u^{sc}+k^2u^{sc}&=0\quad\mbox{in}\quad \Omega_1\cup\Omega_2,\\
  \label{eq:bc:s}
u^{sc}&=0\quad\mbox{on}\quad \Gamma/V_h,\\
\quad u^{sc}&= {\color{rot}-}u^{tot}_R\quad\mbox{on}\quad V_h.
\end{align}
In $\Omega_1$, applying the method of separation of variables, we insert
$u^{sc}(x_1,x_2)=\phi(x_2)\psi(x_1)$ into (\ref{eq:hlm:s}) and obtain the
following eigenvalue equations for $\phi$
\begin{align}
  \label{eq:phi:1}
  \frac{d^2\phi}{d x_2^2}  + k^2 \phi &= \beta \phi,\quad{\rm for}\ x_2>0\\
  \label{eq:phi:2}
  \phi(0) &= 0,
\end{align}
and the associated equation for $\psi$
\begin{align}
  \label{eq:psi}
  \frac{d^2\psi}{d x_1^2}   + \beta \psi = 0, \quad{\rm for}\ x_1<0.
\end{align}

Next, along $x_2$-axis, we use a complex-coordinate transformation
\[
  \hat{x}_2 = x_2 + i\int_{0}^{x_2}\sigma_2(t)dt,
\]
where $\sigma_2(t)=0$ for $t<L$ and $\sigma_2(t)>0$ for $t\geq L$ and the half-plane
$x_2\geq L$ with a nonzero absorbing function $\sigma_2$ is called the perfectly
matched layer (PML),  which is capable of absorbing outgoing waves quite efficiently \cite{ber94, chewee94}. According to Eqs. (\ref{v}) and
(\ref{eq:def:us}), $u^{sc}$ and the  outgoing wave $v$ in
$\Omega_1$ differ by a multiple of the reflected plane wave $e^{i\alpha x_1 +
  i\beta x_2}$ in $\Omega_1/\Omega_L$ only when $\Omega_L\subset\Omega_1$. If
$\beta$ is close to $0$, this plane wave can propagate nearly parallel to the
PML entrance $x_2=L$, which numerically causes inefficiency of the PML
absorption. To resolve this issue, our previous work \cite{luluson18} suggests
to eliminate the plane wave by the following Robin-type relation,
\[
  \frac{\partial u^{sc}}{\partial x_2} - i\beta u^{sc} = \frac{\partial v}{\partial x_2} - i\beta v.
\]
The Sommerfeld radiation condition (\ref{RC}) for $v$ implies that $v$ is
approximately an outgoing wave $e^{ik r}$ at infinity so that it decays
rapidly in the PML as $x_2\rightarrow \infty$. Consequently, since
$\partial_{x_2} v -i\beta v$ also produces an outgoing wave at infinity,
\[
  \frac{\partial u^{sc}(\hat{x}_2)}{\partial \hat{x}_2} - i\beta u^{sc} = \frac{\partial v}{\partial \hat{x}_2} - i\beta v
\]
decays as well, implying
\[
  \frac{d\phi(\hat{x}_2)}{d \hat{x}_2}-i\beta \phi(\hat{x}_2),
\]
approaches $0$ for $x_2\rightarrow\infty$. Thus, by setting
$\hat{\phi}(x_2) = \phi(\hat{x}_2)$ and by terminating the PML layer at
$x_2=L+d$ for the PML thickness $d>0$, we get from
(\ref{eq:phi:1}-\ref{eq:phi:2}) that
\begin{align}
  \label{eq:hphi:1}
  \frac{1}{1+i\sigma_2}\frac{d}{d x_2}\left(  \frac{1}{1+i\sigma_2}\frac{d\hat{\phi}}{d x_2}\right)  + k^2 \hat{\phi}= \beta \hat{\phi},\quad{\rm for}\ x_2>0,
\end{align}
with the following boundary conditions
\begin{align}
  \label{eq:hphi:2}
  \hat{\phi}(0) = 0,\\
  \label{eq:hphi:3}
  \hat{\phi}'(L+d) - i(1+i\sigma_2) \beta \hat{\phi}(L+d) \approx 0.
\end{align}
Employing the Chebyshev collocation method in \cite{sonyualu11} to solve
 the above eigenvalue problems Eqs.~(\ref{eq:hphi:1}),
(\ref{eq:hphi:2}), and (\ref{eq:hphi:3}), we obtain $N$ solutions of eigenpairs
$\{\beta_j^1, \hat{\phi}_j^1\}_{j=1}^{N}$ when $N$ collocation points
$\{x_2^j\}_{j=1}^{N}$are used to discretize $x_2\in[0,L+d]$. According to
\cite{luluson18}, ${\rm Im}(\beta_j^1)\geq 0$ so that ${\rm
  Im}(\sqrt{\beta_j^1})\geq 0$ and ${\rm Re}(\sqrt{\beta_j^1})\geq 0$.

Now, inserting each eigenpair into Eq.~(\ref{eq:psi}) yields two independent
solutions $e^{-i\sqrt{\beta_j^1} x_1}$ and $e^{i\sqrt{ \beta_j^1 } x_1}$. We
claim that $u^{sc}$ propagates only towards negative $x_1$-axis  so
  that we choose $\psi_j=e^{-i\sqrt{\beta_j^1} x_1}$ that propagates towards
  negative $x_1$-axis. To show
this, we distinguish two possible cases occurring here. If $\alpha\geq 0$, the
given incident wave $u^{in}$ propagates towards positive $x_1$-axis so that one
easily sees that $\Omega_1\subset\Omega_L$, which indicates that $u^{sc}=v$ is
outgoing in $\Omega_1$. If otherwise $\alpha < 0$, $u^{in}$ now propagates
towards negative $x_1$-axis so that $\Omega_L\subset \Omega_1$, then $u^{sc}$ is
an outgoing wave in $\Omega_1$ plus a multiple of reflected wave $e^{i\alpha x_1
  + i\beta x_2}$ in $\Omega_1/\Omega_L$ which still propagates towards negative
$x_1$-axis. Consequently, $\psi_j=e^{-i\sqrt{\beta_j^1} x_1}$ such that we get
$N$ eigenmodes to approximate $\hat{u}^s$,
\begin{align}
  \label{eq:us:1}
  \hat{u}^{sc}(x_1,x_2) \approx \sum_{j=1}^{N}c_j \hat{\phi}_j^1(x_2) e^{-i\sqrt{\beta_j^1} x_1},\quad{\rm in}\quad \Omega_1,
\end{align}
where $x_2$ is collocated at $\{x_2^j\}_{j=1}^{N}$.

Repeating the same procedure of variable separation in $\Omega_2$, one obtains
$N+M$ eigenmodes to approximate $\hat{u}^s$,
\begin{align}
  \label{eq:us:2}
  \hat{u}^{sc}(x_1,x_2) \approx \sum_{j=1}^{N+M}d_j \hat{\phi}_j^2(x_2) e^{i\sqrt{\beta_j^2} x_1},\quad{\rm in}\quad \Omega_2,
\end{align}
where $x_2$ is collocated at the common points $\{x_2^j\}_{j=1}^{N}$ in
$[0,L+d]$ and also at $M$ extra points $ \{x_2^j\}_{j=N+1}^{N+M}$ in $V_h=[-h,0]$. On $x_1=0$ separating $\Omega_1$ and $\Omega_2$, we have for
$j=1,\ldots, N$ that
\begin{align}
  \label{eq:gov:1}
  u^{sc}(0-,x_2^j) - u^{sc}(0+,x_2^j) &= u_R^{tot}(0,x_2^j) - u_L^{tot}(0,x_2^j),\\
  \label{eq:gov:2}
  \partial_{x_1}u^{sc}(0-,x_2^j) - \partial_{x_1}u^{sc}(0+,x_2^j) &= \partial_{x_1}u_R^{tot}(0,x_2^j) - \partial_{x_1}u_L^{tot}(0,x_2^j),
\end{align}
by (\ref{eq:def:us}), and for $j=N+1,\ldots,N+M$ that
\begin{align}
  \label{eq:gov:3}
  u^{sc}(0+,x_2^j) = u_R^{tot}(0,x_2^j),
\end{align}
by (\ref{eq:bc:s}).

Eqs.~(\ref{eq:gov:1}-\ref{eq:gov:3}) together with the
expansions (\ref{eq:us:1}) and (\ref{eq:us:2}) give rise to a linear system of
$2N+M$ equations for the unknowns $\{c_j\}_{j=1}^N$ and $\{d_j\}_{j=1}^{N+M}$.
Solving this linear system, we get $c_j$ and $d_j$ so that $u^{sc}$ in $\Omega_1$
and $\Omega_2$ are obtained by (\ref{eq:us:1}) and (\ref{eq:us:2}), which
eventually gives the total field $u^{tot}$ by (\ref{eq:def:us}).

As for incident cylindrical waves, the total wave field $\Phi$, post-subtracting
the free-space Green's function $G$, satisfies the Sommerfeld radiation
condition (\ref{RC}) in the whole domain above $\Gamma$. Therefore, the mode
matching procedure described above can be adapted with ease to solve the
scattering problem for cylindrical incident waves; we omit the details here.
\section{Numerical examples}\label{sec:numerics}
In this section, we will carry out several numerical experiments to validate our
newly proposed radiation condition. In all examples, we assume that the
free-space wavelength $\lambda=1$ so that the free-space wavenumber $k_0=2\pi$,
and the refractive index of the background medium above $\Gamma$ is $n=1$.
In setting up the PML, we choose
\begin{equation}
  \label{eq:sigma}
  \sigma_2(t) = (x_2-L)\sigma/d,\quad t\geq L.
\end{equation}
for a positive constant $\sigma$.

\noindent{\bf Example 1.} In the first example, we directly analyze the
scattering problem for $\Gamma$ in (\ref{eq:Gamma}) with $h=\lambda$. We
consider two different incident waves: (1) a plane incident wave with the
incident angle $\theta=\frac{\pi}{6}$; (2) a cylindrical incident wave excited
by the source point $(0.2,0.2)$. We observe the total wave field $u^{tot}$ in the
domain $[-2.5,2.5]\times [-2.5,2.5]$ above $\Gamma$ so that we take $L=2.5$ in the PML.

To validate our numerical solutions, we use $\sigma=70$ and $d=1$ in the PML,
and compute $N=280$ eigenmodes in $\Omega_1$, and $N+M=420$ eigenmodes in
$\Omega_2$, i.e., $M=140$ points are used to discretize $V_h$, in the NMM method,
to get a reference solution $u_{\rm ref}$ for each of the two incident waves, as
shown in Figure~\ref{fig:ex1:1} below.
\begin{figure}[!ht]
  \centering
  a)\includegraphics[width=0.4\textwidth]{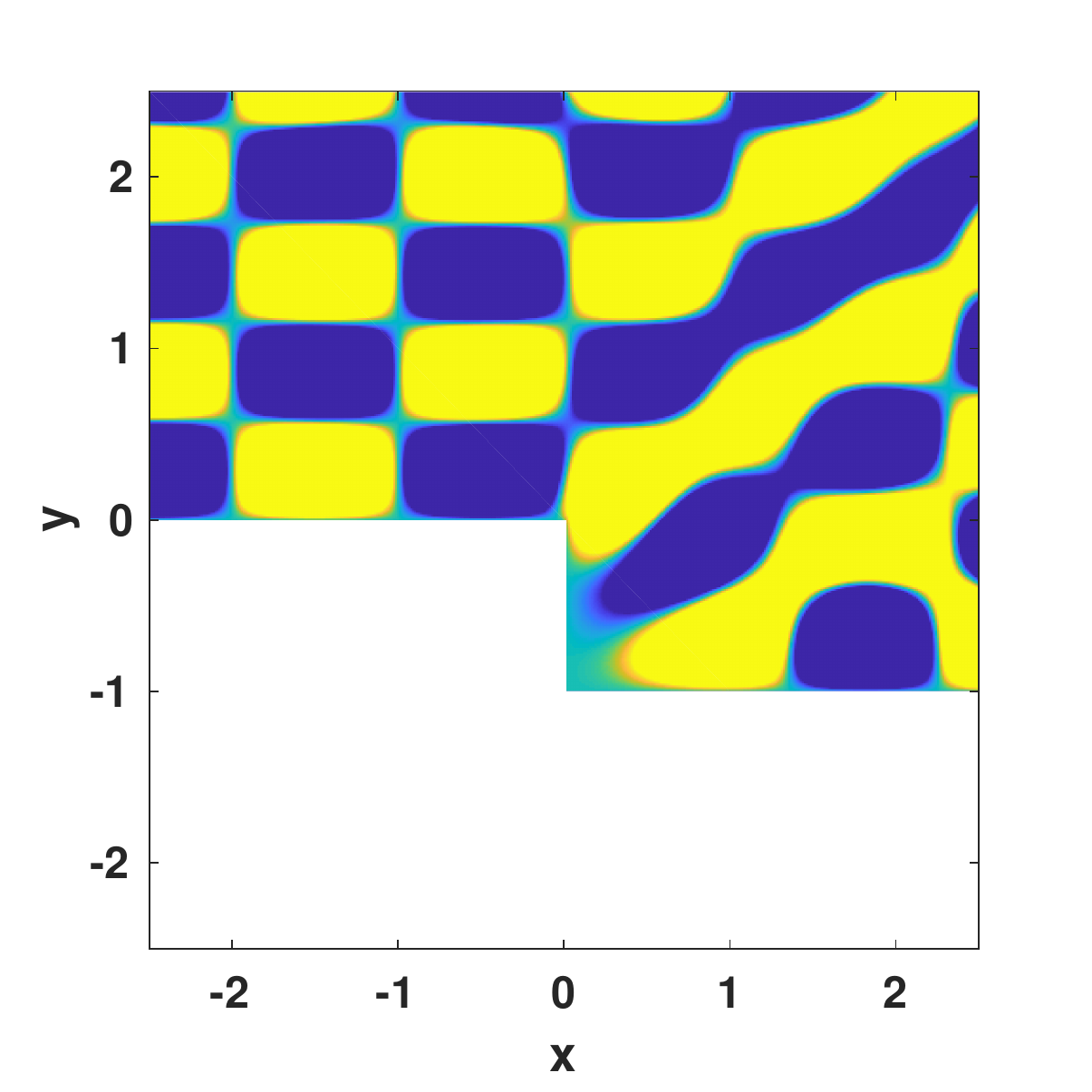}
  b)\includegraphics[width=0.4\textwidth]{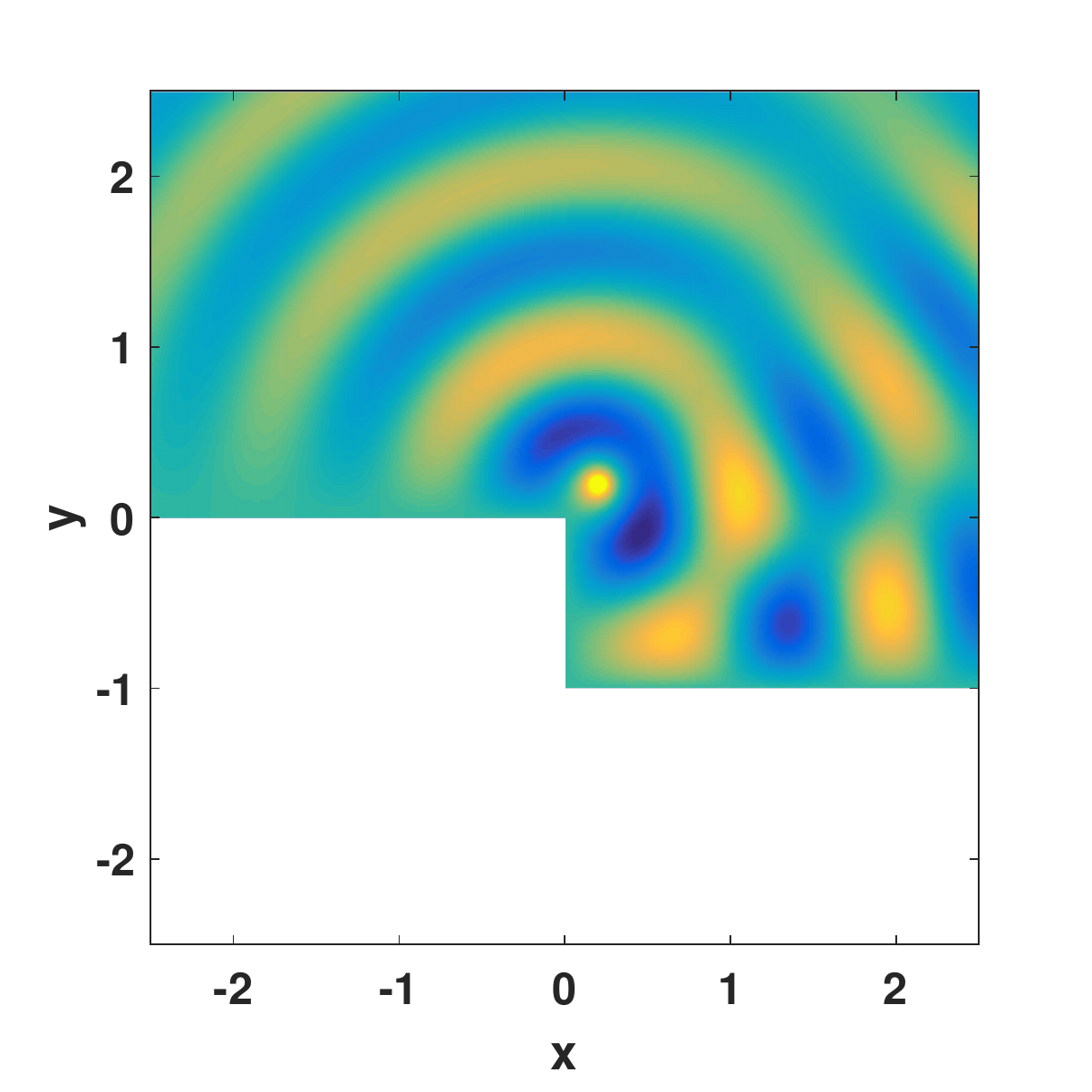}
	\caption{Real part of the total wave field $u^{tot}$ in $[-2.5,2.5]\times[-2.5,2.5]$ for: (a) incident plane wave
    with incident angle $\theta=\frac{\pi}{6}$; (b) incident cylindrical wave
    excited by the source point $(0.2,0.2)$. White region indicates the PEC substrate.  }
  \label{fig:ex1:1}
\end{figure}

To illustrate the absorption efficiency of our PML and to validate the newly
proposed radiation condition (\ref{RC}), we compute the following relative
error,
\begin{align}
  \label{eq:rel:err}
  { E_{\rm rel}} = \frac{\max_{(x,y)\in S}|u^{\rm tot}_{\rm ref}(x,y) - u^{\rm
      tot}_{\rm NMM}(x,y)|}{\max_{(x,y)\in S}|u^{\rm tot}_{\rm ref}(x,y)|},
\end{align}
for different values of $\sigma$ and $d$ in (\ref{eq:sigma}). The set $S=\{(x,y)|x = 0, y=-1, 0, 2.5\}$
defines where numerical solutions and the reference solution are compared; this
choice is typical since it contains all corners of $\Gamma$ and the interior
boundary point of the PML.

Figure~\ref{fig:ex1:2}(a) shows the convergence curve of $E_{\rm rel}$ for
$\sigma=70$ and for different values of $d$, ranging from $0.001$ to $1$; for
a fixed $d=1$, Figure~\ref{fig:ex1:2}(b) shows the convergence curve of
$E_{\rm rel}$ for different values of $\sigma$, ranging from $0.1$ to $70$. We
observe from Figure~\ref{fig:ex1:2} that $E_{\rm ref}$ decays exponentially with the PML parameters
$\sigma$ and $d$ at the beginning when numerical discretization error is not
dominant.

\begin{figure}[!ht]
  \centering
  (a)\includegraphics[width=0.4\textwidth]{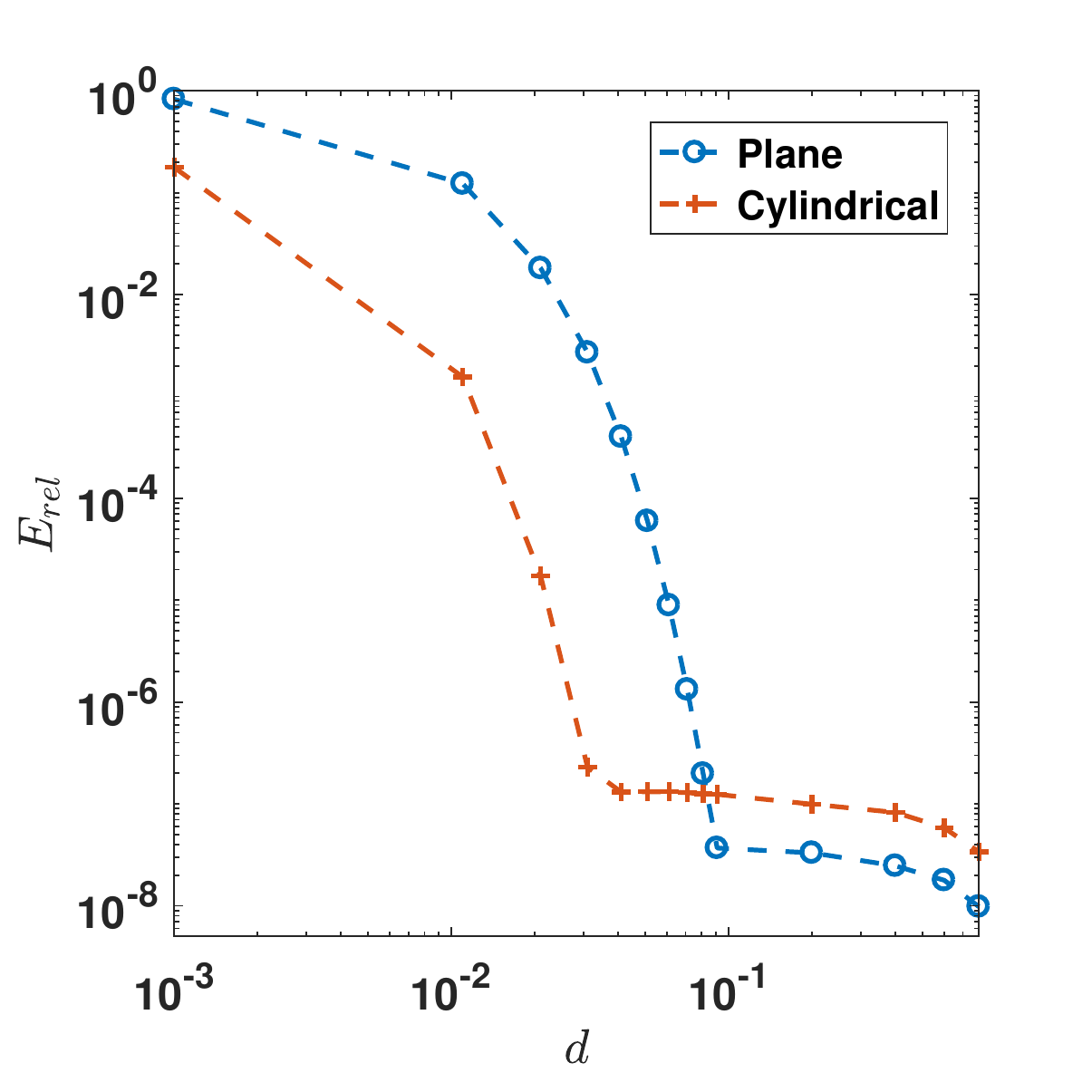}
  (b)\includegraphics[width=0.4\textwidth]{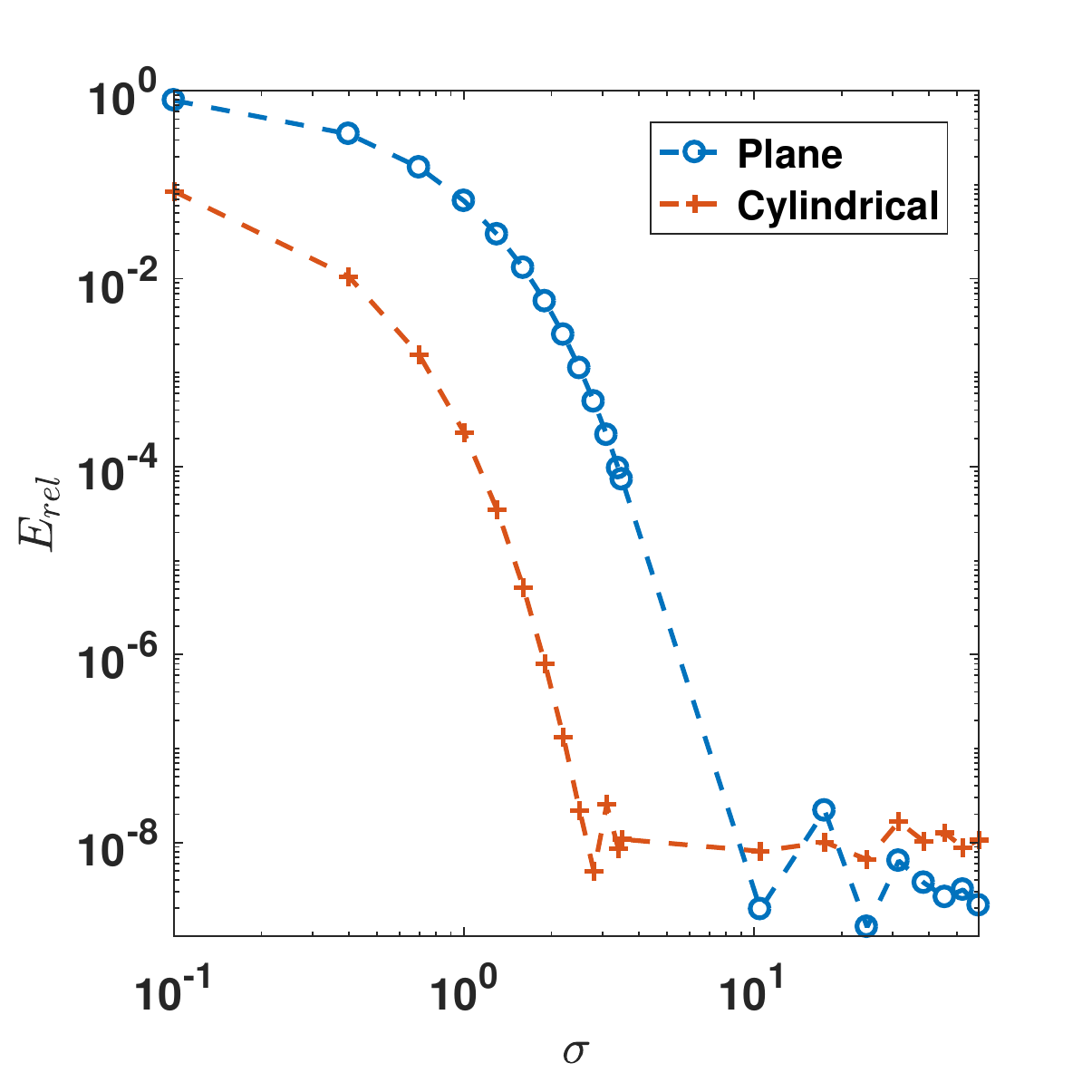}
	\caption{(a) Convergence curve of $E_{\rm rel}$ versus PML thickness $d$
    ranging from $0.001$ to $1$ when $\sigma=70$; (b) Convergence curve of
    $E_{\rm rel}$ versus $\sigma$ ranging from $0.1$ to $70$ when $d=1$.
    lines marked with 'o' indicate curves for plane incident waves, while lines
    marked with '+' indicate curves for cylindrical incident waves.}
  \label{fig:ex1:2}
\end{figure}

\noindent{\bf Example 2.} In this example, we slightly modify the medium in
Example 1 by attaching a penetrable medium of refractive index $n=2$ to the
vertical segment $V_h$ of $\Gamma$, as shown in Figure~\ref{fig:ex2:1}(c).
Again, we compute total wavefields in $[-2.5,2.5]\times[-2.5,2.5]$ for the same
two incident waves used in Example 1. To apply the NMM method, we now need to
split the medium above the PEC surface into three $x$-uniform regions
$\Omega_1$, $\Omega_2$ and $\Omega_3$ separated by $x=0$ and $x=1$. We use $280$
eigenmodes to express the wavefield in $\Omega_1$, and $420$ eigenmodes in the
other two regions $\Omega_2$ and $\Omega_3$. Again, reference solutions are
obtained by setting $\sigma=70$ and $d=1$ in the PML, as shown in
Figure~\ref{fig:ex2:1} (a) and (b).
\begin{figure}[!ht]
  \centering
  a)\includegraphics[width=0.277\textwidth]{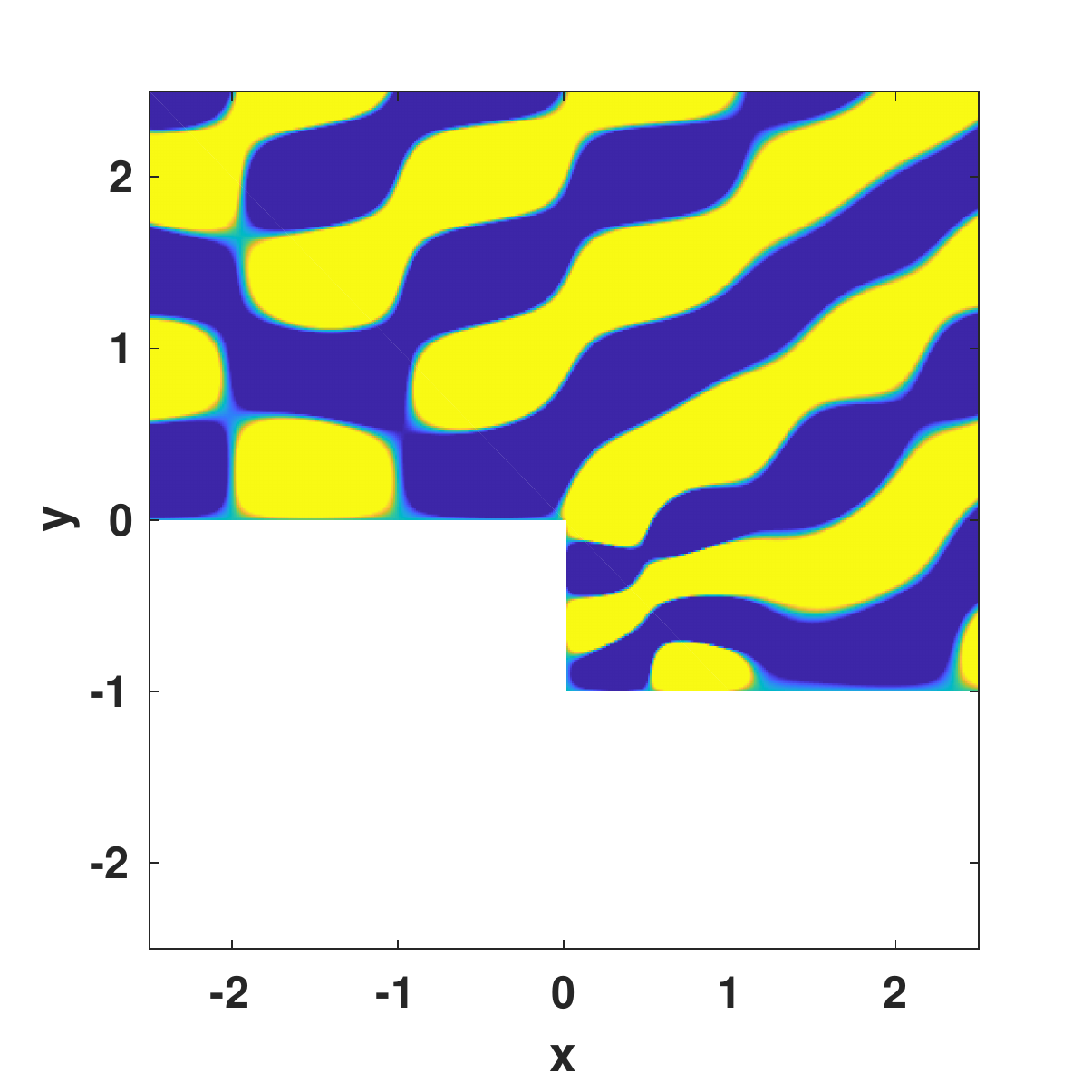}
  b)\includegraphics[width=0.277\textwidth]{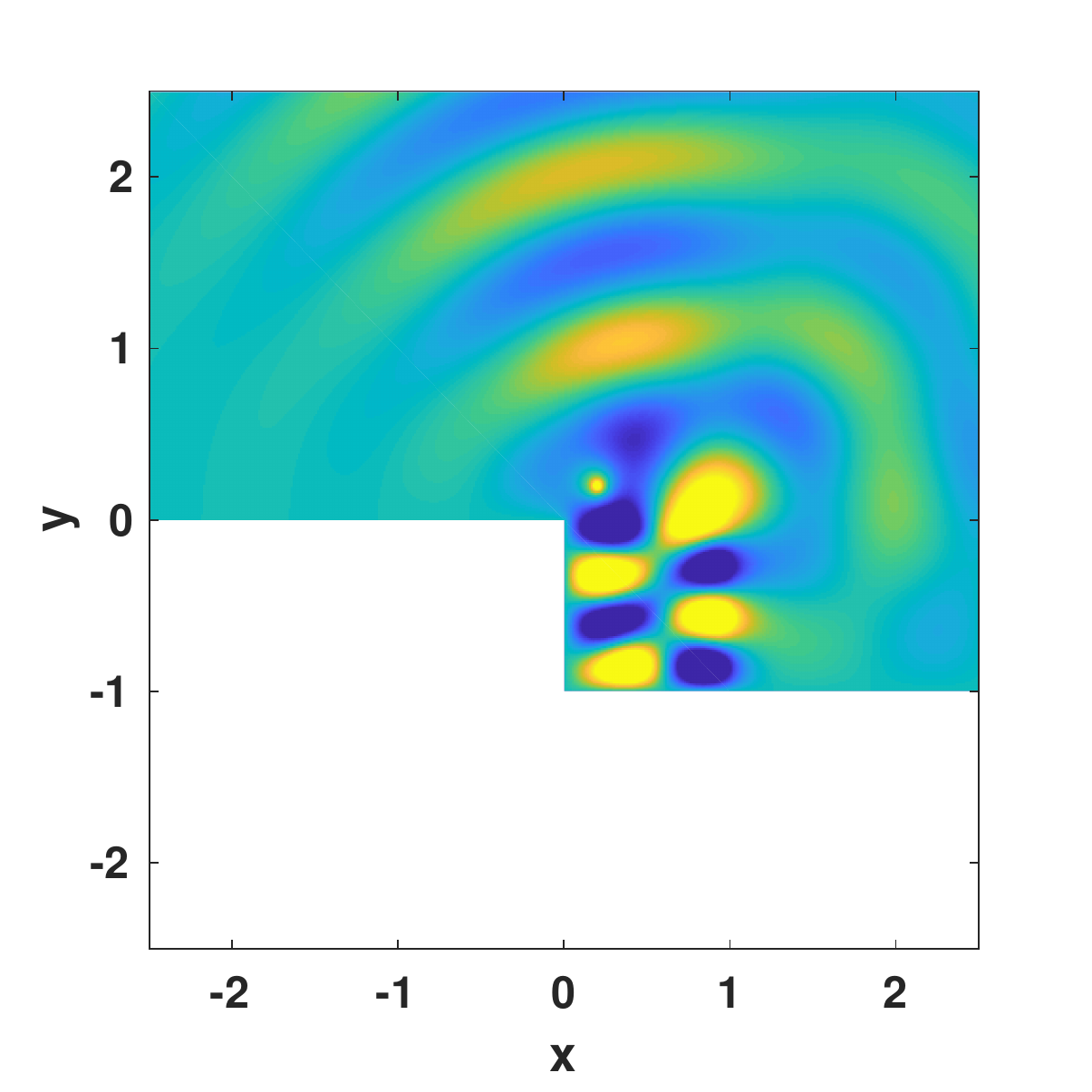}
  c)\includegraphics[width=0.327\textwidth]{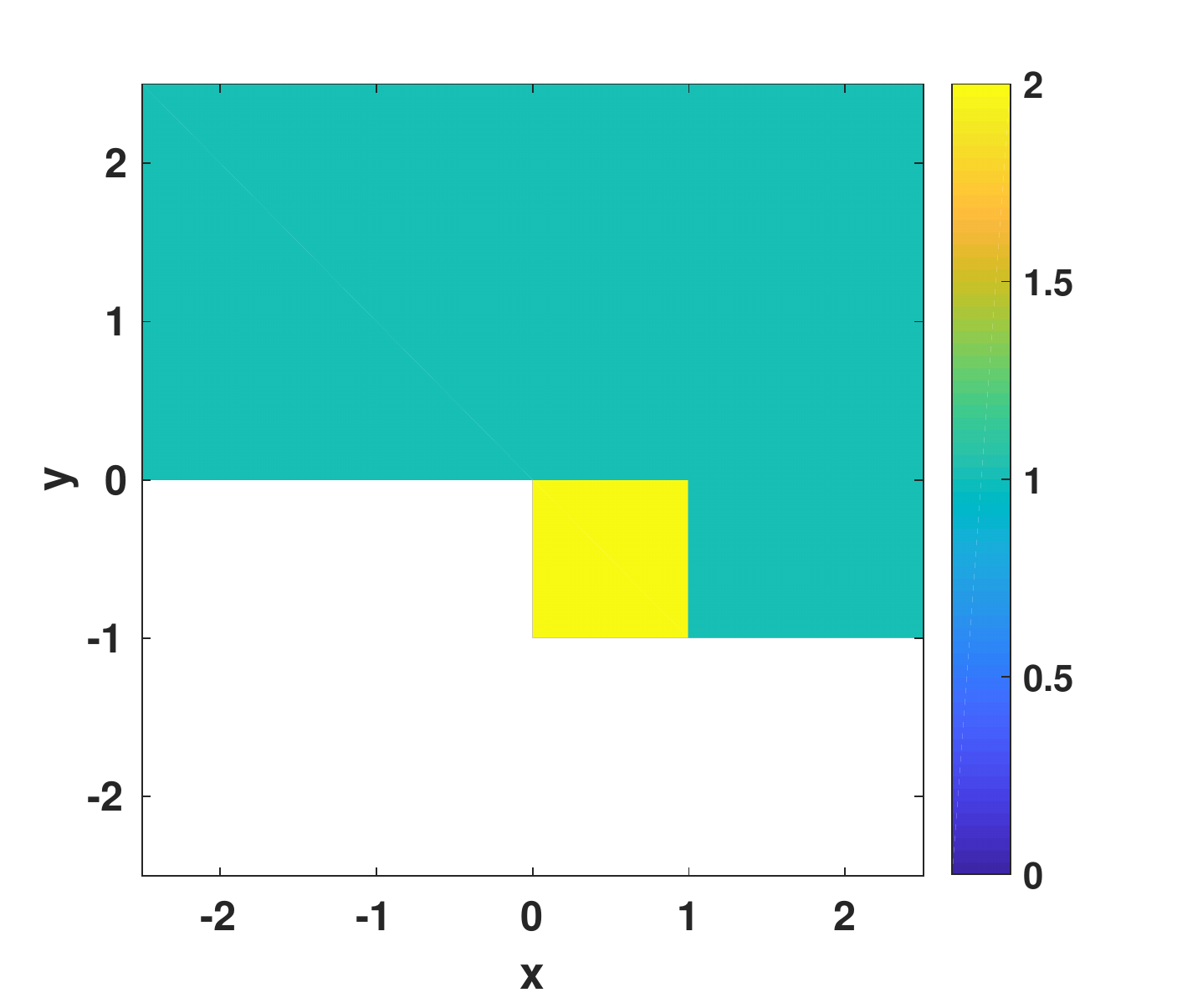}
	\caption{Real part of the total wave field $u^{tot}$ in $[-2.5,2.5]\times[-2.5,2.5]$
    for: (a) incident plane wave with incident angle $\theta=\frac{\pi}{6}$; (b)
    incident cylindrical wave excited by the source point $(0.2,0.2)$. White
    region indicates the PEC substrate. (c) Profile of the refractive index
    above the PEC substrate indicated by the white region. }
  \label{fig:ex2:1}
\end{figure}

To validate the absorption efficiency of our PML, we compute $E_{\rm rel}$
defined in (\ref{eq:rel:err}) for different choices of $\sigma$ and $d$, where
now $S = \{(x,y)|x=0,1,y=-1,0,2.5\}$ is chosen to include all corner points.

Figure~\ref{fig:ex2:2}(a) shows the convergence curve of $E_{\rm rel}$ for
$\sigma=70$ and for different values of $d$, ranging from $0.001$ to $1$; for
a fixed $d=1$, Figure~\ref{fig:ex2:2}(b) shows the convergence curve of
$E_{\rm rel}$ for different values of $\sigma$, ranging from $0.1$ to $70$. As
in Example 1, we observe from Figure~\ref{fig:ex2:2} that $E_{\rm ref}$ again
decays exponentially with the PML parameters $\sigma$ and $d$ at the beginning
when numerical discretization error is not dominant.
\begin{figure}[!ht]
  \centering
  (a)\includegraphics[width=0.4\textwidth]{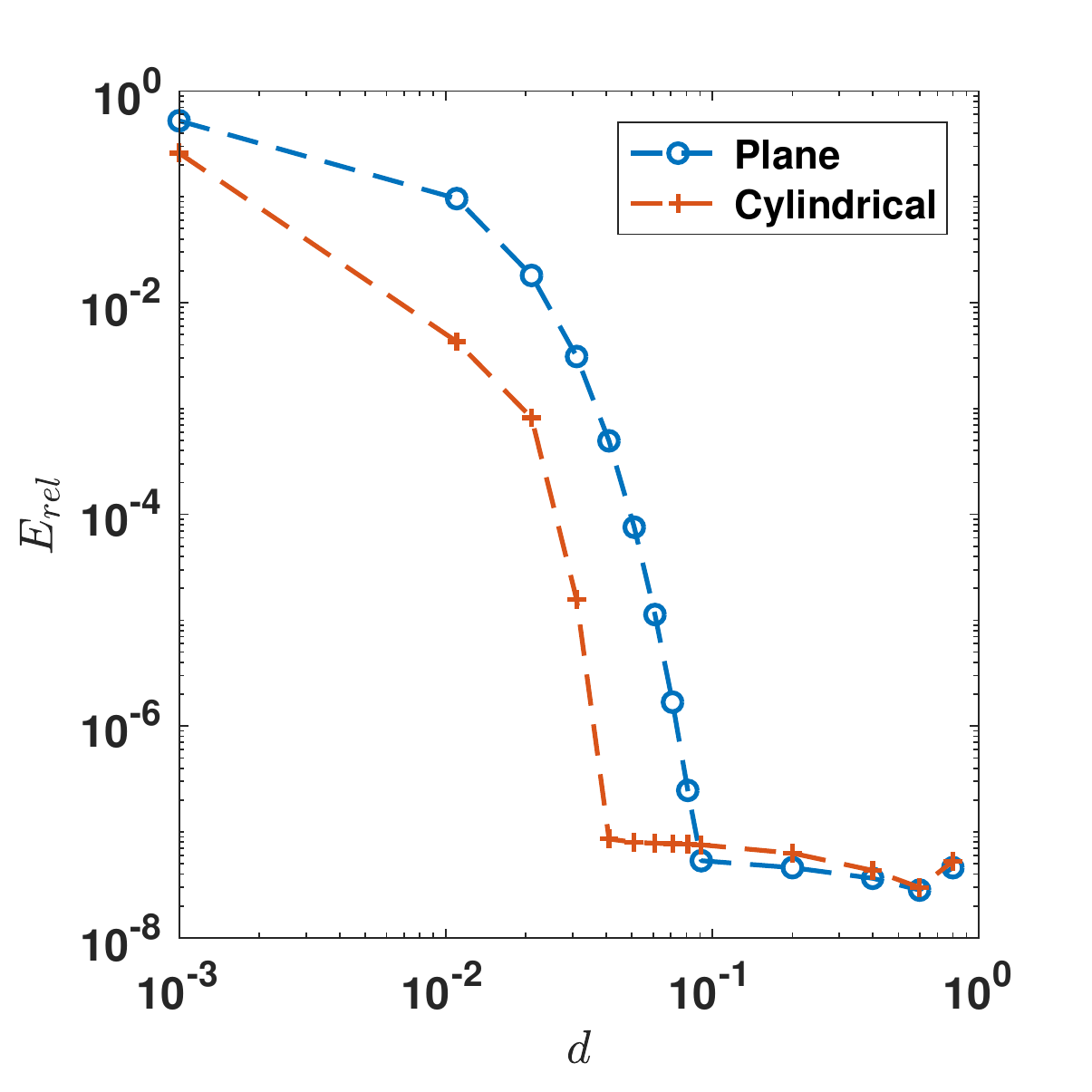}
  (b)\includegraphics[width=0.4\textwidth]{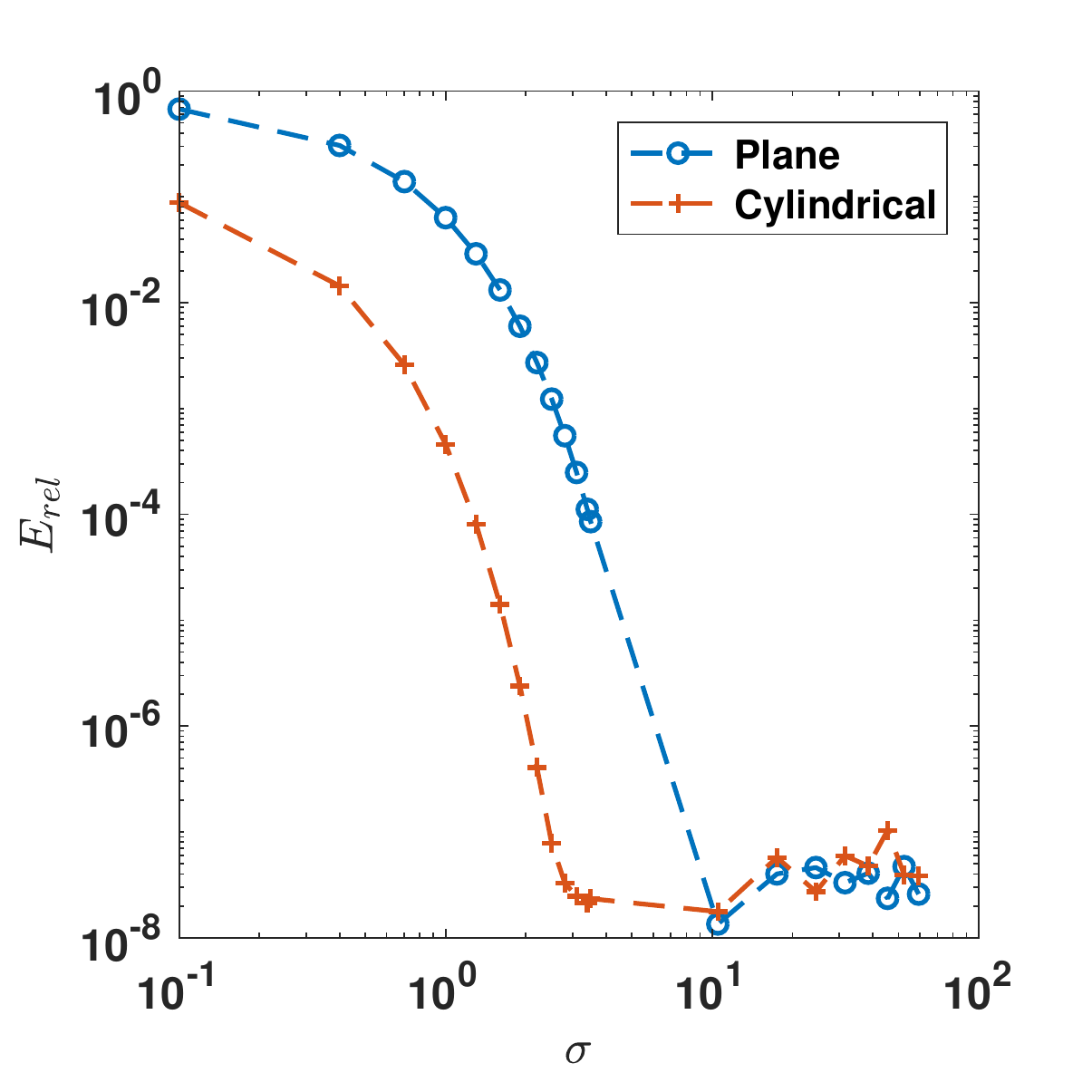}
	\caption{(a) Convergence curve of $E_{\rm rel}$ versus PML thickness $d$
    ranging from $0.001$ to $1$ when $\sigma=70$; (b) Convergence curve of
    $E_{\rm rel}$ versus $\sigma$ ranging from $0.1$ to $70$ when $d=1$.
    lines marked with 'o' indicate curves for plane incident waves, while lines
    marked with '+' indicate curves for cylindrical incident waves.}
  \label{fig:ex2:2}
\end{figure}

\noindent{\bf Example 3.} In the last example, unlike the previous two examples,
we make several indentations and put one penetrable medium on the PEC substrate,
as shown in Figure~\ref{fig:ex3:1}(c). Two incident waves are considered here:
(1) a plane incident wave with incident angle $\theta=\frac{\pi}{6}$; (2) a
cylindrical incident wave excited at the source point $(7.2,1.2)$. With totally
3640 eigenmodes to express wavefield in all $x$-uniform regions, we compute two
numerical solutions for the two incident waves respectively in $[-2.5,10.5]\times[-2.5,2.5]$, where we take
$\sigma=70$ and $d=1$ in the PML. Numerical results are shown in
Figure~\ref{fig:ex3:1}(a) and (b).
\begin{figure}[!ht]
  \centering
  a)\includegraphics[width=0.277\textwidth]{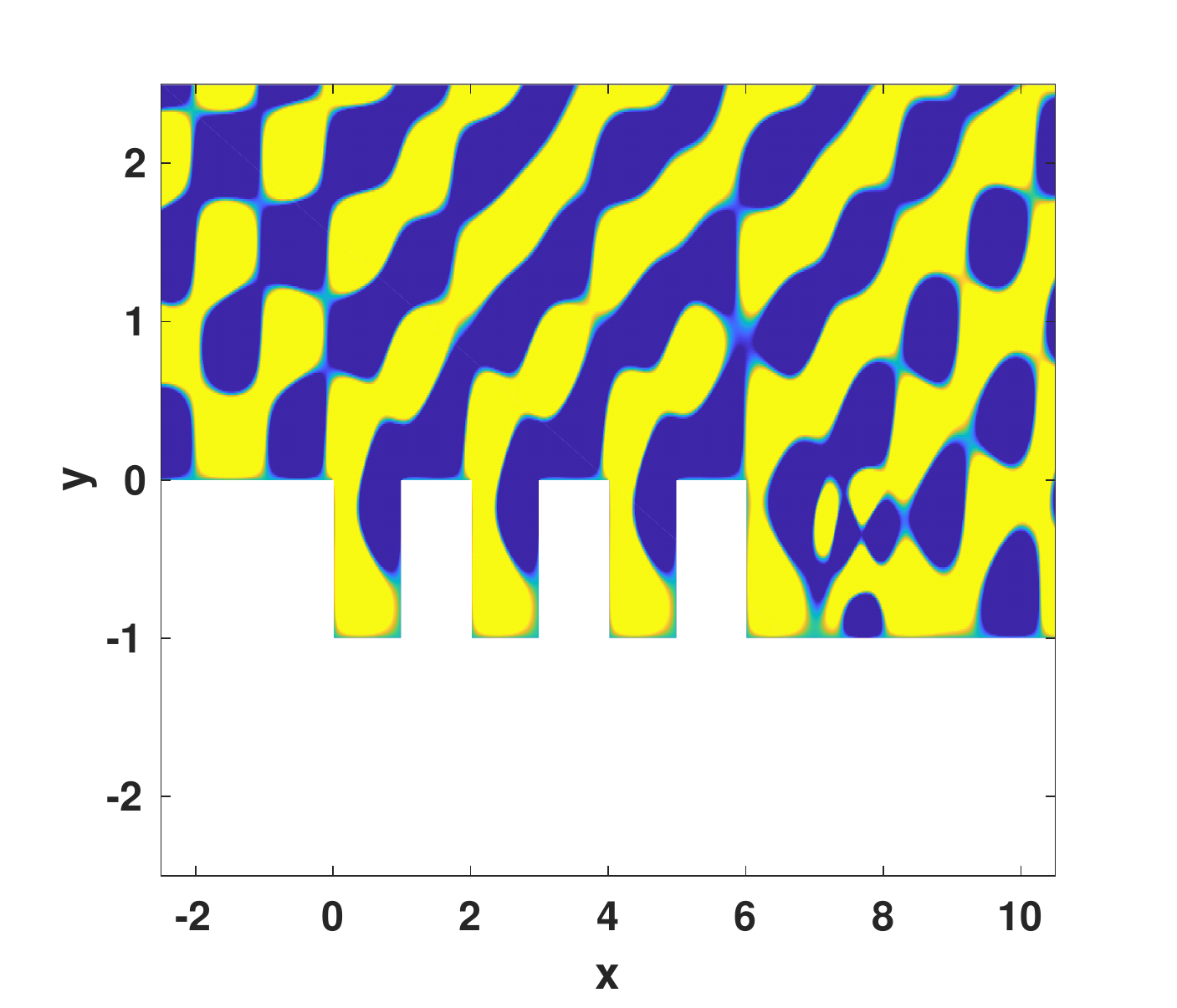}
  b)\includegraphics[width=0.277\textwidth]{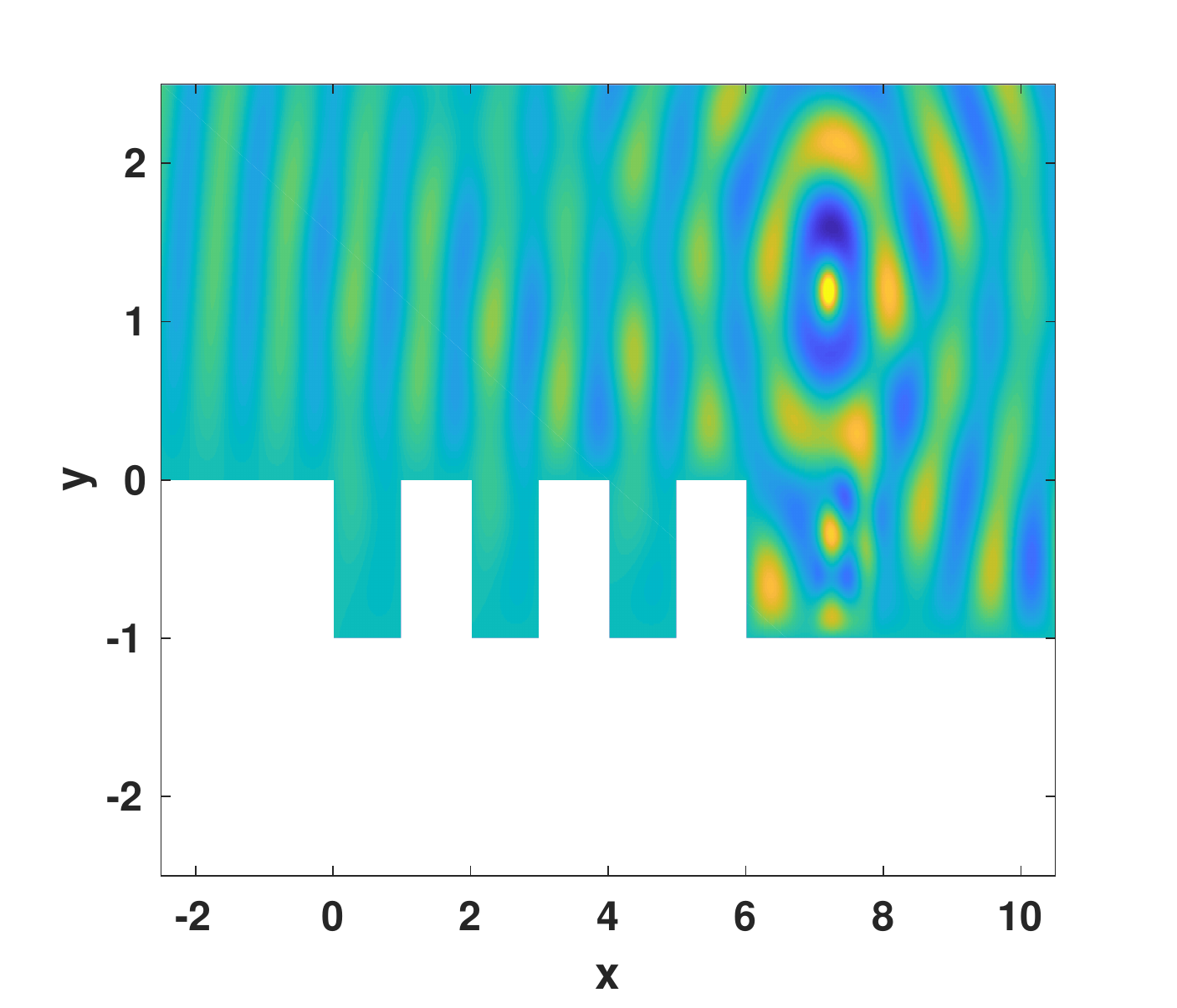}
  c)\includegraphics[width=0.327\textwidth]{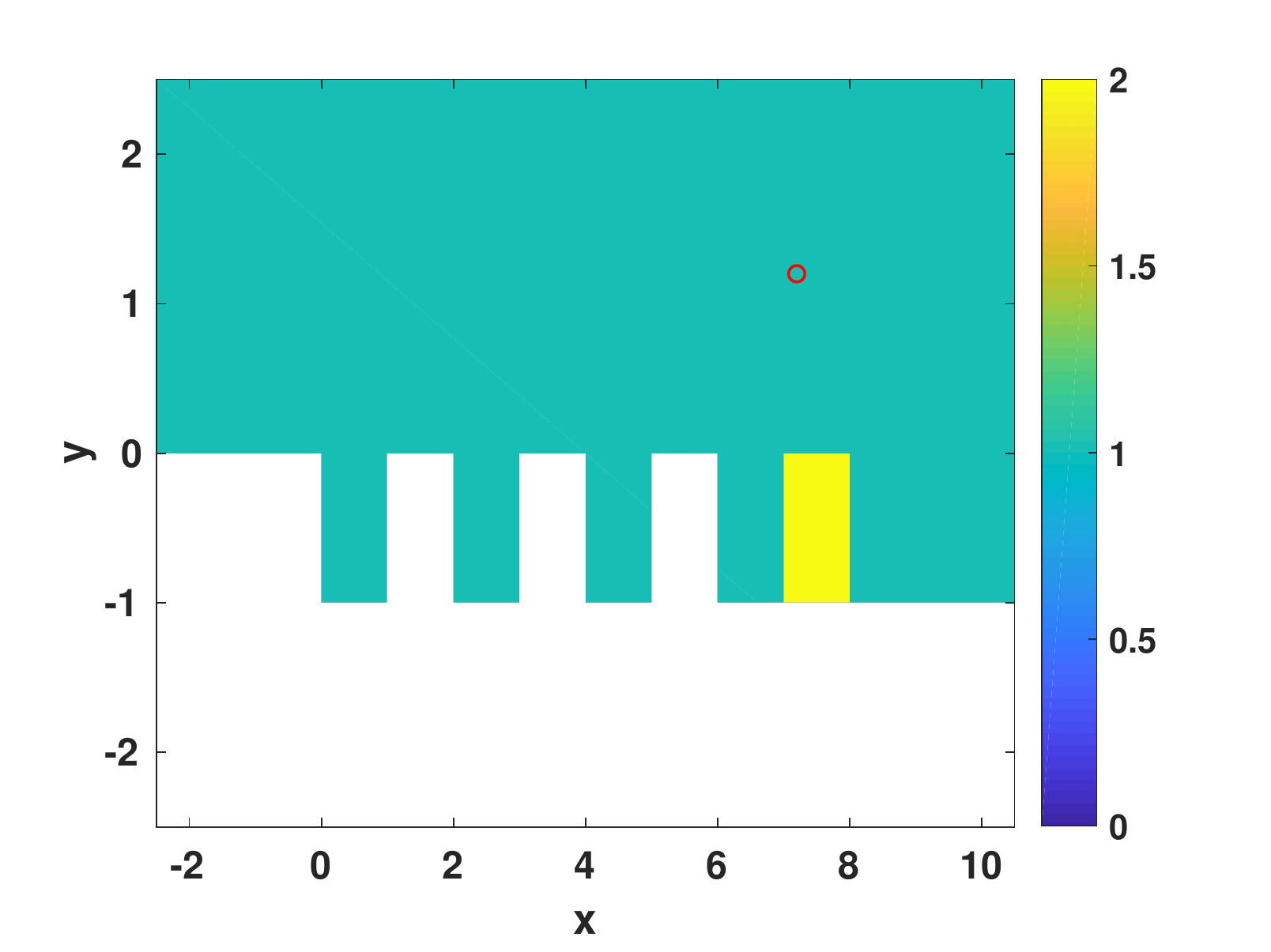}
	\caption{Real part of the total wave field $u^{tot}$ in
    $[-2.5,10.5]\times[-2.5,2.5]$ for: (a) incident plane wave with incident
    angle $\theta=\frac{\pi}{6}$; (b) incident cylindrical wave excited by the
    source point $(7.2,1.2)$. White region indicates the PEC substrate. (c)
    Profile of the refractive index above the PEC substrate indicated by the
    white region;  red circle indicates the location of the source
      point in (b)}.
  \label{fig:ex3:1}
\end{figure}

\section{Conclusion}

In this paper, we analyzed a {\color{rot} sound-soft rough surface scattering problem in two dimensions},
where the globally perturbed surface is assumed to consist of two horizontal
half lines pointing oppositely and one single vertical line segment connecting
their endpoints. For an incident plane wave, we enforced that the scattered wave,
post-subtracting reflected plane waves by the two half lines of the scattering
surface in their residential regions respectively, satisfies an integral form
of SRC condition (\ref{RC}) at infinity. With this new radiation condition, we
proved uniqueness and existence of weak solutions by a coupling scheme between
finite element and integral equation methods. Consequently, this indicates that
our new radiation condition is sharper than the ASR condition and UPRC, and generalizes
the radiation condition for scattering problems in a locally perturbed
half-plane.

Numerically, a NMM method was proposed based on our radiation condition. A
perfectly matched layer was setup to absorb the Sommerfeld-type outgoing waves.
Since the medium composes of two horizontally uniform regions, we expanded, in
either uniform region, the scattered wave in terms of eigenmodes and matched the
mode expansions on the common interface between the two uniform regions. This
leads to an algebraic linear system and thus yields numerical solutions to our
problem. Numerical experiments were carried out to validate the new radiation
condition and to show the performance of our numerical method. As the NMM method
is only limited to vertical and horizontal interfaces, a PML-based BIE method
will be developed in an ongoing work when the scattering surface
  contains more general curved interfaces. Besides, we shall extend the current
work to two-layer media with transmission interface conditions. In that case,
the substrates below the interface become penetrable and a new radiation
condition modeling the downward propagating waves should be established.

\section{Appendix: existence of the improper integral}
We need to show that
the following improper integral
\begin{equation}
  \label{eq:impint:1}
{\color{rot}   \lim_{R_1\to\infty}\int_{{\cal L}_{R_1}}e^{i(\alpha x_1 + \beta x_2)}\partial_{\bm \nu}\Phi(x;z)ds(x)= }\int_{\ML} \left[\nabla_x\, \Phi(x;z)\cdot \nu(x)\right] e^{i k\tau(\theta)\cdot x} ds(x),\quad z\in S_R,
\end{equation}
exists for a fixed incident angle $\theta\in(\varepsilon,\pi-\varepsilon)$ with
$\varepsilon>0$, where
$\Phi(x;z)$ is the background Green's function and
\begin{align*}
&\tau=\tau(\theta)=(\cos\theta,\sin\theta), \\
&\nu=\nu(\theta)=(\sin\theta,-\cos\theta), \\
&\ML=\ML(\theta)=\{x=s\tau(\theta),s_0\leq s<\infty\} \quad\mbox{with}\quad
s_0=R.
\end{align*}
Note that $\tau$ and $\nu$ denote respectively the tangential and normal directions at the ray $\ML\subset \mathbb R^2_+$. The integral (\ref{eq:impint:1})
is understood as the limit of
\[
\int_{\ML_l} \left[\nabla_x\, \Phi(x;z)\cdot \nu(x)\right] e^{i k\tau(\theta)\cdot x} ds(x),\quad \ML_l:=\{x=\tau(\theta)\, s: s_0\leq s\leq l\}
\] as  $l\rightarrow\infty$.
By \cite{S.P, CWEL, ZC98, ZC2003, Hu2018},
the two-dimensional background Green's function $\Phi$, which satisfies the {\color{rot}UASP}, can be written as
\[
\Phi(x;z)=G(x,z)-G(x,z_h^*)+ v(x; z),\quad x\in \Omega_\Gamma,
\] where $G$ is the free space's Green's function in $\R^2$ and
$z_h^*\in \mathbb R_-^2$ stands for the image of $z$ with respect to the reflection by the line
$\{z=(z_1,z_2)\in \R^2: z_2=-h\}$. The function $v(x; z)$ is a solution to the Helmholtz equation in $\Omega_\Gamma$.
Since $G(x,z)-G(x, z_h^*)$ decays with the order $|x_1|^{-3/2}$ on $\Gamma$,
it follows from (\cite{CWEL}) that the function $v$ belongs to the weighted Sobolev space $H_\varrho^1({\color{rot} \Omega_a })$ (see (\ref{WSS}) for the definition) for any $a\geq 0$ and $\varrho\in [0, 1)$. 
 In view of the asymptotic behavior of the Hankel function for large arguments, one can readily prove that
\[
\partial_\nu G(x,z)=O(|x|^{-3/2}),\quad  \partial_\nu G(x,z^*_h)=O(|x|^{-3/2})\]
as $|x|\rightarrow \infty$ {\color{rot} on $\mathcal{L}$}, leading to
\begin{equation*}
 \int_{\ML} \nu\cdot \nabla_x\, (G(x,z)-G(x,z^*_h))\; e^{i k\tau\cdot x} ds<\infty.
\end{equation*}
Hence, it remains to consider the improper integral (\ref{eq:impint:1}) for $v$ in place of $\Phi$.

Recalling the Upward Propagating Radiation Condition (UPRC), we may represent $v$ as the integral
\[
v(x;z)=2\int_{y_2=0} \frac{\partial G(x,y)}{\partial y_2}\,v(y;z) dy_1,\quad x_2{\color{rot}>} 0.
\]
The improper integral in the above expression of $v$ can be understood as the duality between
$H_\varrho^{1/2}(\R)$ and its dual space $H_{-\varrho}^{-1/2}(\R)$ for any $\varrho\in [0, 1)$;
we refer to \cite{CWEL} for the equivalence of the UPRC and {\color{rot} UASR} in weighted Sobolev spaces. Obviously, \ben
\nu\cdot\nabla_x v(x;z)=2\int_{y_2=0} \left[\nu\cdot\nabla_x\frac{\partial G(x,y)}{\partial y_2}\right]\,v(y;z) dy_1,\quad x\in \ML,
\enn
where the first term in the integral can be written as
\begin{align}
\label{eq:dbder:G}
\nabla_x\frac{\partial G}{\partial y_2}(x,y)\cdot\nu =& \nabla_x\left( \frac{ik}{4}H_1^{(1)}(k|x-y|) \frac{x_2-y_2}{|x-y|}\right)\cdot\nu\nonumber\\
=&\frac{ik}{4}\Bigg(k{H_1^{(1)}}'(k|x-y|)\frac{x_2-y_2}{|x-y|^2}(x-y)\cdot \nu +
\frac{H_1^{(1)}(k|x-y|)}{|x-y|}\nu_2 \nonumber\\
&+ H_1^{(1)}(k|x-y|)(x_2-y_2)(-\frac{(x-y)\cdot {\color{rot} \nu }}{|x-y|^3})\Bigg).
\end{align}
For $y_2=0$ and $x=s \tau(\theta)\in \ML$, making use of  $x\cdot \nu(\theta)=0$ we obtain
\begin{align}
  \label{eq:dbder:G:2}
  \nabla_x\frac{\partial G}{\partial y_2}(x,y)\cdot\nu =-\frac{ik}{4}\Bigg[&k{H_1^{(1)}}'(k|x-y|)\frac{sy_1\sin^2\theta}{|x-y|^2} +
    \frac{H_1^{(1)}(k|x-y|)\cos\theta}{|x-y|} \nonumber\\
  &- H_1^{(1)}(k|x-y|)\frac{y_1s\sin^2\theta}{|x-y|^3}\Bigg].
\end{align}
For notational convenience, we write the distance $|x-y|$ in the previous relation as
\begin{align}
  \label{eq:nm:x-y}
  d(s,y_1):=|x-y| &= \sqrt{(s\cos\theta - y_1)^2 + s^2\sin^2\theta} \nonumber\\
  &= \sqrt{s^2 - 2s\cos\theta\, \tcr{y_1} + y_1^2} \nonumber \\
  &=\sqrt{(s-y_1\cos\theta)^2 + y_1^2\sin^2\theta}.
\end{align}
\tcr{This implies that
\be\label{inequality}
d(s,y_1)\geq s\sin\theta\geq s_0\sin\theta,\quad d(s,y_1)\geq |y_1|\,\sin\theta.
\en}
\tcr{Hence, the right hand side of (\ref{eq:dbder:G:2}) can be bounded by}
\begin{align}
  \label{eq:dbder:G:3}
  |\nabla_x\frac{\partial G}{\partial y_2}(x,y)\cdot\nu|&\leq
\frac{Ms|y_1|\sin^2\theta}{|x-y|^{5/2}} + \frac{M|\cos\theta|}{|x-y|^{3/2}} +
\frac{Ms|y_1|\sin^2\theta}{|x-y|^{7/2}} \nonumber\\
&\leq \frac{M\,l}{|y_1|^{3/2}\sin^{1/2}\theta} + \frac{M|\cos\theta|}{|y_1|^{3/2}\sin^{3/2}\theta} + \frac{M\,\tcr{s}}{|y_1|^{5/2}\sin^{3/2}\theta},
\end{align}
\tcr{where the constant $M>0$ is uniform in all $y_1\in \R$.}
This indicates the finiteness of
\[
\int_{\ML_l}ds\int_{y_2=0}|(\nabla_x\frac{\partial G}{\partial y_2}(x,y)\cdot v)e^{i k \tau(\theta)\cdot x}|\,dy_1, \quad l\in \R^+.
\]
By Fubini's theorem, we can switch the order of integrations  to obtain
\begin{align}
  \label{eq:eq:impint:5}
  &\int_{\ML}  \partial_\nu v(x; z) e^{ik \tau(\theta){\color{rot}\cdot} x}\,d s(x)\\
  =&\int_{\ML}ds\int_{y_2=0}(\nabla_x\frac{\partial G}{\partial y_2}(x,y)\cdot \nu)e^{i k \tau(\theta)\cdot x}v(y_1,0;\,z)dy_1 \nonumber\\
  =& \lim_{l\rightarrow \infty}\int_{y_2=0}v({\color{rot}y_1},0;z)dy_1\int_{\ML_l}(\nabla_x\frac{\partial G}{\partial y_2}(x,y)\cdot \nu)e^{i{\color{rot}k\tau(\theta)}\cdot x}ds\nonumber\\
  =&\frac{-ik}{4}\lim_{l\rightarrow \infty}\int_{y_2=0}\,v(y_1,0;z)\left[  I_1^l(y_1) +I_2^l(y_1) + I_3^l(y_1)\right]dy_1,
\end{align}
where we have defined the following integrals (cf. (\ref{eq:dbder:G:2}))
\begin{align}
  \label{eq:def:I1}
  I_1^l(y_1) &:= \int_{s_0}^l \frac{ksy_1\sin^2\theta}{d^2(s,y_1)} { H_1^{(1)} }'(kd(s,y_1))e^{iks}ds,\\
  I_2^l(y_1) &:=\int_{s_0}^l \frac{\cos\theta}{d(s,y_1)} { H_1^{(1)} }(kd(s,y_1))e^{iks}ds,\\
  I_3^l(y_1) &:=-\int_{s_0}^l \frac{sy_1\sin^2\theta}{d^3(s,y_1)} { H_1^{(1)} }(kd(s,y_1))e^{iks}ds.
\end{align}
{\color{rot}As for $I_2^l$ and $I_3^l$, we can easily get the following estimates}
\begin{align}
  \label{eq:est:I2}
  |I_2^l(y_1)|&\leq \int_{s_0}^{l}\frac{M}{d^{3/2}(s,y_1)}ds\leq M\int_{s_0}^l s^{-3/2}\sin^{-3/2}\theta ds
  \leq 2M\sin^{-3/2}\theta s_0^{-1/2},
\end{align}
and
\begin{align}
  \label{eq:est:I3}
  |I_3^l(y_1)|&\leq \int_{s_0}^{l}\frac{Ms|y_1|\sin^2\theta}{d^{7/2}(s,y_1)}ds\leq \int_{s_0}^l \frac{M}{d^{3/2}(s,y_1)} ds
  \leq 2M\sin^{-3/2}\theta s_0^{-1/2},
\end{align} respectively.
Below we shall prove the boundedness of $I_1^l(y_1)$ for all $y_1\in \R$ and $l>s_0$.
For sufficiently large $s$, we can find a positive constant
$M$ such that
\[
  \left|{H_1^{(1)}}'(kd(s,y_1)) - {\color{rot}e^{-\pi i/4}}\left( \frac{2}{\pi k d(s,y_1)}\right)^{1/2}
      e^{ikd(s,y_1)} \right|\leq \frac{M}{d^{3/2}(s,y_1)} .
\]
Hence, by (\ref{eq:def:I1}),
\begin{align}
  \label{eq:est:I1}
  |I^l_1(y_1)| \leq &\frac{\sin^2\theta\sqrt{2k}}{\sqrt{\pi}}\left|  \int_{s_0}^{l}\frac{sy_1}{d^{5/2}(s,y_1)}e^{ik(d(s,y_1)+s)}ds\right|
                      + k\int_{s_0}^{l}\frac{Ms|y_1|\sin^2\theta}{d^{7/2}(s,y_1)} ds\nonumber\\
  \leq &\frac{\sin^2\theta\sqrt{2k}}{\sqrt{\pi}}\left|  \int_{s_0}^{l}\frac{sy_1}{d^{5/2}(s,y_1)}e^{ik(d(s,y_1)+s)}ds\right| + 2Mk\sin^{-3/2}\theta s_0^{-1/2}.
\end{align}
\tcr{The right hand side of (\ref{eq:est:I1}) is obviously bounded for $|y_1|\leq 1$, because of the first relation in (\ref{inequality}).
To derive an upper bound uniformly for $|y_1|>1$,}
we introduce a new variable $t(s)=d(s,y_1)+s$. Then one  can easily check that
\[
  t'(s)> 0,\quad s = \frac{t^2 - y_1^2}{2(t - y_1\cos\theta)},\quad t \geq
  |y_1|,\quad \frac{ds}{dt} = \frac{[d(t,y_1)]^2}{2(t-y_1\cos\theta)^2}.
\]
Thus, using change of variables we find
\begin{align}
  \label{eq:est:I1:1}
  \int_{s_0}^{l}\frac{sy_1}{d^{5/2}(s,y_1)}e^{ik(d(s,y_1)+s)}ds
 =& \int_{t(s_0)}^{t(l)}\frac{(t^2-y_1^2)y_1e^{ikt}}{d^3(t,y_1)(t-y_1\cos\theta)^{1/2}}\color{rot}{\sqrt{2}dt}\nonumber\\
=& \sin^2\theta\Bigg[  \left.\frac{y_1(t^2-y_1^2)e^{ikt}}{{\color{rot} i}kd^3(t,y_1)(t-y_1\cos\theta)^{1/2}}\right|_{t=t(s_0)}^{t=t(l)} \nonumber\\
  &- \int_{t(s_0)}^{t(l)}y_1\left( \frac{t^2-y_1^2}{d^3(t,y_1)(t-y_1\cos\theta)^{1/2}} \right)'\frac{e^{ikt}}{{\color{rot}i}k}dt\Bigg].
\end{align}
Since $t-y_1\cos\theta\geq |y_1|(1 -\cos\theta) = 2|y_1|\sin^2(\theta/2)$ and
 $d(t,y_1)\geq \max(t\sin\theta, |y_1|\sin\theta)$, we have
\begin{align}
  \label{eq:est:I1:2}
  \left|\left.\frac{y_1(t^2-y_1^2)e^{ikt}}{kd^3(t,y_1)(t-y_1\cos\theta)^{1/2}}\right|_{t=t(s_0)}^{t=t(l)}\right|\leq \frac{1}{k\sin^3\theta\sqrt{|y_1|}\sin(\theta/2)},
\end{align}
and
\begin{align}
&\left|  y_1\left( \frac{t^2-y_1^2}{d^3(t,y_1)(t-y_1\cos\theta)^{1/2}}
  \right)'\right|\nonumber\\
  =&\Bigg| \frac{2y_1t}{d^3(t,y_1)(u-y_1\cos\theta)^{1/2}} - \frac{3y_1(
  t^2-y_1^2 ) d'(t,y_1)}{d^4(t,y_1)(t-y_1\cos\theta)^{1/2}}
                  -\frac{y_1(t^2-y_1^2)}{2d^3(t,y_1)(t-y_1\cos\theta)^{3/2}}\Bigg|\nonumber\\
  \leq & \frac{2}{t\sin^3\theta(t-y_1\cos\theta)^{1/2}} + \frac{6}{t\sin^3\theta (t-y_1\cos\theta)^{1/2}} +\frac{1}{4\sin^2(\theta/2)t\sin^3\theta(t-y_1\cos\theta)^{1/2}}\nonumber \\
   \leq & \frac{{\color{rot} M}}{{\color{rot} \sin^2(\theta/2) }t\sin^3\theta(t-|y_1\cos\theta|)^{1/2}}.
\end{align}
Therefore,
\begin{align}
  \label{eq:est:I1:3}
  &\left|\int_{t(s_0)}^{t(l)}y_1\left( \frac{l^2-y_1^2}{d^3(t,y_1)(t-y_1\cos\theta)^{1/2}} \right)'\frac{e^{ikt}}{k}dt\right|\nonumber\\
  \leq & \int_{t(s_0)}^{t(l)}\frac{{\color{rot} M}}{kt{\color{rot}\sin^2(\theta/2)}\sin^3\theta(t-|y_1\cos\theta|)^{1/2}}dt\nonumber\\
  =&{\color{rot} \frac{M}{k\sin^3\theta\sin^2(\theta/2)} }\left\{
     \begin{array}{lc}
       2\left.\arctan\left( \sqrt{\frac{t-|y_1\cos\theta|}{|y_1\cos\theta|}} \right)\right|_{t=t(s_0)}^{t=t(l)}|y_1\cos\theta|^{-1/2}, & |y_1\cos\theta|\neq 0,\\
     \left.-\frac{1}{2\sqrt{t}}\right|_{t=t(s_0)}^{t=t(l)}, & |y_1\cos\theta| = 0,
     \end{array}
     \right.
\end{align}
which is bounded as well for sufficiently large $y_1$,  due to the fact $|\arctan x|\leq
\pi/2$ and  $t\geq t(s_0)\geq s_0$. Consequently, by (\ref{eq:est:I1}),
(\ref{eq:est:I1:1}), (\ref{eq:est:I1:2}) and (\ref{eq:est:I1:3}), we can
conclude that $|I_1^l(y_1)|$ is \tcr{uniformly bounded for all $y_1\in \R$}.
This together with boundedness of $I_2^{l}$ and $I_3^{l}$ implies that
$I_j^l\in L_{-\varrho}(\R)$ with $\varrho>1/2$ and thus lies in the dual space
$H^{-1/2}_{-\varrho}(\R)$ of $v(y_1,0;z)$ for $\varrho\in(1/2, 1)$.
By the dominated
convergence theorem, we can now claim that the integral in
(\ref{eq:eq:impint:5}) exists. This proves the boundedness of the improper integral (\ref{eq:impint:1}).


\section*{Acknowledgement}

WL thanks Prof. Ya Yan Lu for useful discussion. The work of G. Hu is
supported by the NSFC grant (No. 11671028) and NSAF grant (No. U1530401).

\end{document}